\renewcommand\section{\@startsection {section}{1}{\z@}
{-30pt \@plus -1ex \@minus -.2ex}
{2.3ex \@plus.2ex}
{\normalfont\normalsize\bfseries}}
\renewcommand\subsection{\@startsection{subsection}{2}{\z@}
{-3.25ex\@plus -1ex \@minus -.2ex}
{1.5ex \@plus .2ex}
{\normalfont\normalsize\bfseries}}
\renewcommand{\@seccntformat}[1]{\csname the#1\endcsname. }
\newcounter{thmcounter}
\numberwithin{thmcounter}{section}
\newtheorem{theorem}[thmcounter]{Theorem}
\newtheorem{definition}[thmcounter]{Definition}
\newtheorem{lemma}[thmcounter]{Lemma}
\newtheorem{corollary}[thmcounter]{Corollary}
\newtheorem{conjecture}[thmcounter]{Conjecture}
\definecolor{dkgreen}{rgb}{0,0.6,0}
\definecolor{gray}{rgb}{0.5,0.5,0.5}
\definecolor{mauve}{rgb}{0.58,0,0.82}
\tiny\color{gray},
\newcommand{\N}{\mathbb{N}}
\newcommand{\Z}{\mathbb{Z}}
\newcommand{\champ}{
\mathcal{T}
}
\newcommand{\seq}{
\mathcal{S}
}
\DeclareMathOperator{\ord}{ord}
\title{ Powers in prime bases and a problem on central binomial coefficients}
\author{Sebastian Tim Holdum}
\address{Sebastian Tim Holdum,Niels Bohr Institute, University of Copenhagen, Denmark}
\email{sebastian.holdum@nbi.dk}
\author{Frederik Ravn Klausen}
\address{Frederik Ravn Klausen,Department of Mathematics, University of Copenhagen, Denmark}
\email{ tlk870@alumni.ku.dk}
\author{Peter Michael Reichstein Rasmussen}
\address{Peter Michael Reichstein Rasmussen, Department of Mathematics, University of Copenhagen, Denmark}
\email{nmq584@alumni.ku.dk}
\begin{document}

\maketitle

\centerline{\bf Abstract}
\noindent
It is an open problem whether $ \binom{2n}{n} $ is divisible by 4 or 9 for all $n>256$. In connection with this, we prove that for a fixed uneven $m$ the asymptotic density of $k$'s such that  $ m \nmid \binom{2^{k+1}}{2^{k}} $ is 0. 
To do so we examine numbers of the form $\alpha^{k}$ in base $p$, where $p$ is a prime and $(\alpha, p)=1$. 
For every $n$ and $a$ we find an upper bound on the number of $k$'s less than $a$ such that $(\alpha^{k})_p$ contains less than $n$ digits greater than $\frac{p}{2}$.
This is done by showing that every sequence of the form $\langle  \sigma_t, \dots, \sigma_1,\sigma_0 \rangle$, where $0\leq \sigma_i<p$ for $i\geq 1$ and $\sigma_0$ is in the residue class generated by $\alpha$ modulo $p$, occurs at specific places in the representation $(\alpha^k)_p$ as $k$ varies. 

\pagestyle{myheadings} 
\thispagestyle{empty} 
\baselineskip=12.875pt 
\vskip 30pt

\section{Introduction}
A well known conjecture by Erd\H{o}s states that the central binomial coefficient $\binom{2n}{n}$ is never squarefree for $n>4$. The problem was finally solved in 1996 by Granville and  Ramaré \cite{E-formodning}, but is still inspiring further investigation of the central binomial coefficients. One question left unanswered can be found in \emph{Concrete Mathematics} \cite{GKP} and is the following conjecture which is the starting point of this paper.
\begin{conjecture}\label{main}
The central binomial coefficient $\binom{2n}{n}$ is divisible by 4 or 9 for every $n>4$ except $n=64$ and $n=256$.
\end{conjecture}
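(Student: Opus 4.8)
The plan is to decouple the two primes and treat divisibility by $4$ and by $9$ separately, converting each $p$-adic valuation of $\binom{2n}{n}$ into a statement about the base-$p$ digits of $n$ via Legendre's and Kummer's theorems. \textbf{First} I dispose of the prime $2$. Legendre's formula gives $v_2\binom{2n}{n}=2s_2(n)-s_2(2n)$, and since doubling merely shifts binary digits we have $s_2(2n)=s_2(n)$, so $v_2\binom{2n}{n}=s_2(n)$, the number of ones in the binary expansion of $n$. Hence $4\mid\binom{2n}{n}$ precisely when $s_2(n)\geq 2$, i.e.\ whenever $n$ is \emph{not} a power of two. Therefore $4\nmid\binom{2n}{n}$ forces $n=2^k$, and the entire conjecture collapses to a statement about powers of two: for every $k$ with $2^k>4$ we must show $9\mid\binom{2^{k+1}}{2^k}$, except at $k=6$ and $k=8$.

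\textbf{Next} I translate the $9$-divisibility into ternary carries. By Kummer's theorem $v_3\binom{2n}{n}$ equals the number of carries when adding $n$ to itself in base $3$; doubling a base-$3$ digit $d$ produces a carry exactly when $2d+(\text{carry in})\geq 3$, so a digit $2$ always generates a carry, a digit $1$ propagates an incoming carry, and a digit $0$ absorbs it. Thus the target is: \emph{for every $k$ with $2^k>4$ other than $k=6,8$, the base-$3$ expansion of $2^k$ yields at least two carries under doubling.} For the small range $3\leq k\leq 8$ one computes the expansions directly, finding two or more carries for $k=3,4,5,7$, while $64=2101_3$ gives exactly one carry and $256=100111_3$ gives none; this verifies the conjecture for $n\leq 256$ and isolates precisely the two listed exceptions. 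Note in particular that zero carries means $2^k$ has no digit equal to $2$ in base $3$.

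\textbf{For the generic range} $k\geq 9$ I would force two carries by producing suitable digit blocks in the low-order base-$3$ expansion of $2^k$. Because $(2,3)=1$, the residue of $2^k$ modulo $3^m$ is controlled by $\ord_{3^m}(2)$, and the occurrence result quoted in the abstract guarantees that every admissible block $\langle\sigma_t,\dots,\sigma_0\rangle$ with $\sigma_0$ in the class generated by $2$ appears among the low digits of $2^k$ for suitable $k$. Concretely, if the two lowest digits are $\langle 2,2\rangle$ then $d_0=2$ generates a carry and $2\cdot 2+1=5\geq 3$ generates a second, so $2^k\equiv 8\pmod 9$ (equivalently $k\equiv 3\pmod 6$) already supplies two carries; other residue classes are covered by blocks such as $\langle 2,0,\dots,0,2\rangle$ that force two independent carries at separated positions.

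The main obstacle is the universal quantifier in this last step. The block-occurrence statement and its associated counting bound are intrinsically \emph{density} results: they control how many $k<a$ admit fewer than two large digits but cannot exclude a sporadic $k$ for which $2^k$ happens to have at most one ternary carry. Upgrading ``almost every $k$'' to ``every $k\geq 9$'' would in particular decide whether the digit $2$ ever again fails to appear in the base-$3$ expansion of $2^k$, which is a well-known open problem of Erdős; so closing the gap requires a genuinely new input beyond the equidistribution of $2^k$ modulo powers of $3$, and it is exactly here that I expect any attempt at the full conjecture to stall.
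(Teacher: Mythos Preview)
This statement is labelled a \emph{conjecture} in the paper and is not proved there; the paper's contributions toward it are the density-zero result of Theorem~\ref{density} (with $m=9$) and a computer verification for all $n\le 2^{10^{13}}$. So there is no proof in the paper to compare your proposal against, and your own final paragraph already concedes that the proposal is not a proof either.

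That said, your reductions are correct and coincide with the paper's setup. From $v_2\binom{2n}{n}=s_2(n)$ one is left with $n=2^k$, and Kummer's theorem converts $9\mid\binom{2^{k+1}}{2^k}$ into ``at least two carries when $2^k$ is added to itself in base~$3$'', exactly as the paper frames it. Your hand check of $3\le k\le 8$ is accurate and isolates precisely the exceptions $k=6,8$. Where the third paragraph overreaches is the direction of the block-occurrence result: Theorem~\ref{ck} says that as $b$ ranges over a full period each admissible block is realised by \emph{some} $b$, not that for a \emph{given} $k$ some carry-forcing block must appear among the low ternary digits of $2^k$. That orientation yields only the counting bound $\seq_3^2(a)=O\!\left(a^{\log_3 2}\log a\right)$ and hence density zero for the bad $k$'s, nothing more. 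You correctly identify the residual ``every $k\ge 9$'' claim with Erd\H{o}s's open problem on the ternary digit~$2$ in $2^k$, which the paper also cites as the essential obstruction.
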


Since $4$ divides $\binom{2n}{n}$ when $n$ is not a power of 2, we consider only binomial coefficients of the form $\binom{2^{k+1}}{2^k}$ in our study of the conjecture. By Kummer's theorem the greatest exponent of a prime $p$ dividing the central binomial coefficient $\binom{2n}{n}$ is equal to the number of carries as $n$ is added to itself in base $p$. Thus, to prove the conjecture it is sufficient to show that there are at least 2 carries when $2^k$ is added to itself in base 3 and $k>8$.

In relation to this, Erd\H{o}s conjectured in 1979 \cite{erdos79} that the base 3 representation of $ 2^k $ only omits the digit 2 for $ k = 0,2, 8 $, noting that no methods for attacking it seemed to exist.  

Methods for analysing the digits of powers of a number $\alpha$ in prime bases are scarce, and further developing such methods is what most of this paper will be concerned with.

Considering the periodicity of the base $p$ representation of $\alpha^k$, for a prime $p$ and $(p, \alpha)=1$, we find new patterns that allow us to bound the function 
\begin{equation*}
  \seq_p^n(a) = \#\left\{0\leq s<a \mid (\alpha^s)_p \text{ contains less than $n$ digits greater than $\frac{p}{2}$} \right\}.
\end{equation*}
Specifically we show that every sequence of the form $\langle  \sigma_t, \dots, \sigma_1,\sigma_0 \rangle $, where $0\leq \sigma_i<p$ for $i\geq 1$ and $\sigma_0$ is in the residue class generated by $\alpha$ modulo $p$, occurs at given places in the representation $(\alpha^k)_p$ as $k$ varies.

Interestingly, if $p$ is not a Wieferich prime base $\alpha$, it turns out that this system occurs on every digit of $(\alpha^k)_p$. 

We use the above observations to show that
\begin{equation} \label{seq_main}
  \seq_p^n(a) \leq 8\left( \log_p(a) \right)^{n-1}a^{\log_p\left( \frac{p+1}{2} \right)},
\end{equation}
and in special cases we improve results due to Narkiewicz \cite{Nark}, and Kennedy and Cooper \cite{generalisering}.

The bound (\ref{seq_main}) is  used to prove that for any odd $m\in \N$, the set of numbers $k$ such that $m\nmid\binom{2^{k+1}}{2^k}$ has asymptotic density 0, which in the case $m=9$ specifically addresses conjecture \ref{main}.
\\

Lastly, we have used computer experiments to improve a result due to Goetgheluck \cite{binom_check} which confirmed Conjecture \ref{main} for all $n \leq 2^{4.2\cdot10^7}$.
\begin{theorem}
The central binomial coefficient $\binom{2n}{n}$ is divisible by 4 or 9 for every $n$ such that $4<n\leq2^{10^{13}}$ except for $n=64$ and $n=256$.
\end{theorem}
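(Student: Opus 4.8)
The plan is to reduce the statement to a (large but) finite and structured computation, and then to describe how that computation is made feasible up to $2^{10^{13}}$.

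First I would reduce to powers of two. By Kummer's theorem $v_2\!\left(\binom{2n}{n}\right)$ equals the number of carries when $n$ is added to itself in base $2$, which is exactly the number of $1$'s in the binary expansion of $n$. Hence $4 \mid \binom{2n}{n}$ whenever $n$ is not a power of two, while for $n = 2^k$ one gets $v_2 = 1$, so $4 \nmid \binom{2n}{n}$. Thus the only $n$ in the range demanding attention are $n = 2^k$ with $3 \le k \le 10^{13}$, and for each of these divisibility by $4$ necessarily fails, so I must instead establish divisibility by $9$. Again by Kummer, $9 \mid \binom{2^{k+1}}{2^k}$ if and only if at least two carries occur when $2^k$ is added to itself in base $3$.

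Next I would make the carry condition combinatorial. Doubling a base-$3$ number produces a carry out of position $i$ precisely when $2d_i + c_i \ge 3$, where $d_i$ is the $i$-th base-$3$ digit and $c_i \in \{0,1\}$ the incoming carry; so a carry is generated at every digit equal to $2$ and sustained upward through runs of $1$'s. Consequently the target fails (at most one carry) only when the base-$3$ expansion of $2^k$ either contains no digit $2$, or contains a single $2$ with a $0$ (or the end of the number) immediately above it. The theorem therefore amounts to verifying that, for $3 \le k \le 10^{13}$, this degenerate situation occurs only at $k = 6$ and $k = 8$, whose expansions are $2101_3$ and $100111_3$. The computational engine exploits the periodicity of the low-order digits: the bottom $t$ base-$3$ digits of $2^k$ depend only on $2^k \bmod 3^t$, and since $2$ is a primitive root modulo every power of $3$, this residue is periodic in $k$ with period $\ord_{3^t}(2) = 2\cdot 3^{t-1}$. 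For a fixed $t$ I would precompute, for each residue $k_0 \bmod 2\cdot 3^{t-1}$, the number of carries contributed by positions $0,\dots,t-1$. Because carries can only accumulate, any residue class whose bottom $t$ digits already yield two carries disposes of every $k$ in that class simultaneously, and as $t$ grows the proportion of surviving classes shrinks geometrically; the rare survivors are then handled individually by computing $2^k \bmod 3^s$ for increasing $s$ until a second carry appears.

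The main obstacle is organizing this so that it is at once exhaustive and practical at the scale $10^{13}$. One must treat carry propagation across the top of each examined segment correctly — it is legitimate to stop only once two carries lie strictly inside the inspected positions — and one must control how deep $s$ must be pushed for the sparse set of hard residue classes, carrying out the residual high-modulus computations without error. The delicate point is rigorous completeness of the sieve: proving that no $k$ escapes inspection and that the only classes ever requiring deep inspection reproduce exactly $k = 6$ and $k = 8$. Here the density estimate (\ref{seq_main}) is what guarantees that the survivors are genuinely few, which is precisely what allows the exhaustive verification to terminate within a feasible amount of computation.
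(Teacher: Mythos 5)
Your reduction is exactly the paper's: by Kummer's theorem, divisibility by $4$ fails precisely at $n=2^k$, and for those one needs at least two carries when $2^k$ is added to itself in base $3$; your combinatorial reformulation of the failure condition (no digit $2$, or a single $2$ capped by a $0$ or by the top of the expansion) is correct, as is your identification of $k=6,8$ as the exceptions. Where you genuinely differ from the paper is in how the computation is organized. The paper's appendix performs a direct linear scan: it iterates $k$ from $1$ to $10^{13}$, maintaining only the lowest $35$ base-$3$ digits of $2^k$ by repeated doubling, counts the carries occurring inside that window, writes every $k$ producing fewer than two carries to a file of special cases, and then rechecks those cases individually modulo $3^{50}$ and, if still undecided, modulo $3^{80}$. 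Your proposal instead sieves residue classes of $k$ modulo $\ord_{3^t}(2)=2\cdot 3^{t-1}$, exploiting the same periodicity that underlies Theorem \ref{ck}, discarding a class as soon as its bottom $t$ digits force two carries and refining the survivors at larger $t$. Both computations are self-certifying for the same reason you state (a carry among the inspected low digits is a carry of the full addition), and your sieve would likely be faster since it never touches most values of $k$ individually, while the paper's scan is simpler to implement and to audit. One point needs correcting, however: the bound (\ref{seq_main}) plays no role in the rigor or the termination of either computation, and the paper does not invoke it for this theorem (it is used only for the asymptotic density result, Theorem \ref{density}). That estimate cannot guarantee that your ``increase $s$ until a second carry appears'' step terminates for a given $k$ --- indeed it does not terminate for $k=6$ and $k=8$ --- nor can it certify a priori that no other exceptions exist below $10^{13}$; it merely predicts, heuristically and in advance, that the set of surviving cases is small enough for the verification to be feasible. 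Completeness rests solely on the exhaustiveness of the scan or sieve itself, and the fact that $80$ digits suffice to decide every survivor is an empirical output of the computation, not a consequence of the density bound.
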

\noindent
See the Appendix for source code.

\section{Large digits in prime bases}
In this section we explore the base $p$ representation of powers of an integer $\alpha$, where $p$ is a prime not dividing $\alpha$. We say that a digit
$n$ is ``small'' if $n<\frac{p}{2}$ and ``large'' otherwise. Further, $p$ will always denote an odd prime, and $\alpha>1$ an integer with $(\alpha, p)=1$.

The main goal of the section is to bound the following function in various ways.
\begin{definition}
Let $p$ be an odd prime and $a, n\in \N$. Fix $\alpha$ such that $p\nmid \alpha$. Then set
\begin{equation*}
  \seq_p^n(a) = \#\left\{0\leq s<a \mid (\alpha^s)_p \text{ contains $<n$ large digits} \right\}.
\end{equation*}
\end{definition}
\noindent
Bounding the $S_p^n$ is done by considering periodic properties of $\alpha^k$ in base $p$ as $k$ varies.

\subsection{Notation and definitions}
\begin{definition}
Let $p$ be a prime and $n, k\in \N$. We write $p^k \mid\mid n$ if $p^k\mid n$ and $p^{k+1}\nmid n$, i.e. if $k$ is the greatest exponent of $p$ dividing $n$.
\end{definition}

\begin{definition}
We define the following:
\begin{itemize}
\item $\delta = \{\alpha^k \mod p \mid k\in \Z\}$, i.e. $\delta$ is the set of residues generated by $\alpha$ modulo $p$.
\item $\theta = \#\{a\in \delta \mid 0\leq a < \frac{p}{2}\}$, i.e. $\theta$ is the number of small residues in $\delta$.
\item $\gamma = \ord_p(\alpha) = \vert\delta\vert$.
\end{itemize}

\end{definition}

\begin{definition}
Let $n\in \N_0$. We let $\Lambda_n$ denote the set of sequences of the form 
\begin{equation*}
  \langle \sigma_n, \sigma_{n-1}, \dots, \sigma_{1}, \sigma_0\rangle,
\end{equation*}
where $\sigma_0 \in \delta$ and $0\leq \sigma_i < p$ for $1\leq i \leq n$.
\end{definition}

\begin{definition}
Let $m\in \N$ be represented in base $p$ as $m=\sum_{i\geq 0}a_ip^i$, 
where $ 0\leq a_j<p$.  To pinpoint specific digits we make the following definitions:
 $a_k = (m)_p[k]$ and $\langle a_k,\cdots, a_l\rangle = (m)_p[k:l], k\geq l$.
\end{definition}

\subsection{Sequences}
We will now consider the representations $(\alpha^s)_p$ when $s$ varies to show how members of $\Lambda_k$ occur as subsequences of these representations. 

First, we need a couple of lemmas.
\begin{lemma}\label{dobMid}
	Let $p$ be an odd prime and $\alpha>1$ be given such that $(p, \alpha)=1$. Let further $p^t\mid\mid \alpha^{\gamma p^k}-1$   for some $t>0$ and $k\geq 0$. Then $p^{t+1}\mid\mid \alpha^{\gamma p^{k+1}}-1$.
\end{lemma}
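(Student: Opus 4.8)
The plan is to reduce the whole statement to a single binomial expansion and a comparison of $p$-adic valuations; no induction is needed here, since this lemma is itself the inductive step that presumably feeds a later argument. First I would unpack the hypothesis: $p^t \mid\mid \alpha^{\gamma p^k} - 1$ means exactly that we may write $\alpha^{\gamma p^k} = 1 + p^t u$ for some $u \in \Z$ with $p \nmid u$. The key observation is then that $\alpha^{\gamma p^{k+1}} = \left( \alpha^{\gamma p^k} \right)^p = (1 + p^t u)^p$, so that
\begin{equation*}
  \alpha^{\gamma p^{k+1}} - 1 = (1 + p^t u)^p - 1 = \sum_{j=1}^{p} \binom{p}{j} p^{jt} u^j.
\end{equation*}

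The strategy from here is to isolate the $j = 1$ term, namely $\binom{p}{1} p^t u = p^{t+1} u$, and argue that it strictly dominates all the others in the ultrametric sense, so that the valuation of the sum equals the valuation of this single term. Since $p \nmid u$, the $j=1$ term has $p$-adic valuation exactly $t+1$. For the middle terms $2 \leq j \leq p-1$ I would invoke the standard fact that $p \mid \binom{p}{j}$ when $p$ is prime, giving such a term valuation at least $1 + jt \geq 1 + 2t$, which strictly exceeds $t+1$ because $t \geq 1$. For the final term $j = p$ the valuation is $pt$, and here $pt > t+1$ since $(p-1)t \geq 2 > 1$, using that $p \geq 3$ and $t \geq 1$. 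As the $j=1$ term is thus the unique term of minimal valuation, the entire sum has $p$-adic valuation exactly $t+1$, which is precisely the assertion $p^{t+1} \mid\mid \alpha^{\gamma p^{k+1}} - 1$.

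The only genuinely delicate point—and the step I would watch most carefully—is verifying that every higher-order term strictly exceeds valuation $t+1$, because this is exactly where the two standing hypotheses are consumed: the assumption $t > 0$ is what pushes the middle terms above $t+1$, and the oddness of $p$ (via $p \geq 3$) is what handles the extremal term $j = p$. It is worth noting that this is a lifting-the-exponent phenomenon and that the oddness is essential, since for $p = 2$ the term $j = p = 2$ has valuation $2t$, which only beats $t+1$ once $t \geq 2$; the odd-prime hypothesis is precisely what lets the conclusion hold already for $t = 1$.
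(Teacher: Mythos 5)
Your proof is correct and follows essentially the same route as the paper: both expand $(1+p^t u)^p$ by the binomial theorem, isolate the term $p^{t+1}u$ of valuation exactly $t+1$, and use $p \mid \binom{p}{j}$ together with $t \geq 1$ and $p \geq 3$ to push every remaining term to valuation at least $t+2$. The only cosmetic difference is that the paper lumps the terms with $j \geq 3$ into a single remainder $R$ divisible by $p^{3t}$, whereas you treat each $j$ uniformly; the substance is identical.
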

\begin{proof}
	Let $\alpha^{\gamma p^k}=up^t+1$ with $(u, p)=1$. Then
	\begin{equation*}
		\alpha^{\gamma p^{k+1}} = \left( up^t+1\right)^p = 1+up^{t+1}+u^2p^{2t}\binom p2 + R,
	\end{equation*}
	where $R$ is divisible by $p^{3t}$ and thus divisible by $p^{t+2}$ since $t>0$. Further, $p\mid \binom p2$, so $p^{t+2}\mid u^2p^{2t}\binom p2$ and we get
	\begin{equation*}
		\alpha^{\gamma p^{k+1}} \equiv 1+up^{t+1} \pmod{p^{t+2}}
	\end{equation*}
	showing that $p^{t+1}\mid\mid \alpha^{\gamma p^{k+1}}-1$.
\end{proof}

\begin{lemma}\label{dobbelMid}
Let $p$ be an odd prime and $\alpha>1$ be given such that $(p, \alpha)=1$. Assume that $p^{\tau}\mid\mid \alpha^{\gamma}-1$. Then 
\begin{equation*}
	p^{\tau+k}\mid\mid \alpha^{\gamma p^k}-1 \text{ and }\ord_{p^{\tau+k}}(\alpha)=\gamma p^k
\end{equation*}
for every $k\geq 0$.
\end{lemma}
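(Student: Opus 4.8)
The plan is to prove both claims simultaneously by induction on $k$, using Lemma~\ref{dobMid} as the engine for the divisibility statement. The base case $k=0$ is immediate: by hypothesis $p^\tau \mid\mid \alpha^\gamma - 1$, and $\ord_{p^\tau}(\alpha) = \gamma$ holds because $\gamma = \ord_p(\alpha)$ divides $\ord_{p^\tau}(\alpha)$, while $p^\tau \mid \alpha^\gamma - 1$ forces $\ord_{p^\tau}(\alpha) \mid \gamma$. For the inductive step I would assume $p^{\tau+k}\mid\mid \alpha^{\gamma p^k}-1$ and apply Lemma~\ref{dobMid} with $t = \tau + k$ (which is positive since $\tau \geq 1$) to conclude directly that $p^{\tau+k+1}\mid\mid \alpha^{\gamma p^{k+1}}-1$. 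This handles the first assertion cleanly.

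The second assertion, $\ord_{p^{\tau+k}}(\alpha) = \gamma p^k$, requires a little more care and is where the main work lies. First I would establish that $p^{\tau+k}\mid \alpha^{\gamma p^k}-1$, which is part of what we just proved, so $\ord_{p^{\tau+k}}(\alpha)$ divides $\gamma p^k$. To get the reverse divisibility I would argue that the order cannot be a proper divisor of $\gamma p^k$. Any such proper divisor $d$ must either fail to be a multiple of $\gamma$, or be of the form $\gamma p^j$ with $j < k$. The first case is ruled out since $\ord_{p^{\tau+k}}(\alpha)$ is a multiple of $\ord_p(\alpha) = \gamma$ (reduce modulo $p$). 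For the second, if $\ord_{p^{\tau+k}}(\alpha) = \gamma p^j$ with $j < k$, then $p^{\tau+k}\mid \alpha^{\gamma p^j}-1$; but by the first assertion applied at level $j$ we have $p^{\tau+j}\mid\mid \alpha^{\gamma p^j}-1$, so the highest power of $p$ dividing $\alpha^{\gamma p^j}-1$ is exactly $\tau + j < \tau + k$, a contradiction.

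The main obstacle, if any, is being careful about the logical structure of the induction: the order statement at level $k$ relies on the precise valuation statements $p^{\tau+j}\mid\mid \alpha^{\gamma p^j}-1$ for all $j \leq k$, not merely divisibility. I would therefore phrase the induction so that the exact-divisibility claim $p^{\tau+k}\mid\mid \alpha^{\gamma p^k}-1$ is proved first for all $k$ (a clean chain of applications of Lemma~\ref{dobMid}), and only then deduce the order statement for each $k$ using the full family of valuation facts. With the valuations in hand, the order computation reduces to the elementary divisor-chasing above.
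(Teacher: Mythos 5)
Your proof is correct and takes essentially the same approach as the paper: the exact valuation $p^{\tau+k}\mid\mid \alpha^{\gamma p^k}-1$ by induction via Lemma~\ref{dobMid}, followed by divisor-chasing ($\gamma \mid \ord_{p^{\tau+k}}(\alpha) \mid \gamma p^k$, then using the exact valuations to exclude smaller orders). The only cosmetic difference is that the paper excludes all orders $\gamma p^j$ with $j<k$ at once using just the valuation at level $k-1$ (since any such order divides $\gamma p^{k-1}$), whereas you invoke the valuation at each level $j<k$ separately.
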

\begin{proof}
The first part follows easily by induction on $k$ using Lemma \ref{dobMid}.\\
For the second part note that 
\begin{equation*}
	\gamma=\ord_p(\alpha)\mid \ord_{p^{\tau+k}}(\alpha)\text{ and }\ord_{p^{\tau+k}}(\alpha)\mid \gamma p^k.
\end{equation*}
Thus, $\ord_{p^{\tau+k}}(\alpha)=\gamma p^r$ for some $r\leq k$. By the first part, we have $p^{\tau+k-1}\mid\mid \alpha^{\gamma p^{k-1}}-1$, so $p^{\tau+k}\nmid \alpha^{\gamma p^{k-1}}-1$ and we must have $\ord_{p^{\tau+k}}(\alpha)=\gamma p^k$.
\end{proof}
With these lemmas at hand we are ready to analyse the base $p$ representation $(\alpha^s)_p$. To do so, we use the following definition.
\begin{definition}
Let $a=\dots a_2a_1a_0$ be any integer represented by an infinite sequence $(a_i)_{i\in \N_0}$ in some base. Then we define
\begin{equation*}
	c_{\tau, k}(a) = \langle a_{\tau+k-1}, \dots ,a_{\tau+1} , a_{\tau},a_0\rangle.
\end{equation*}
\end{definition}
We make this definition since  our interest lies in the digits underlined here:
\begin{equation*}
\dots \underline{a_{\tau+k-1}\dots a_{\tau}}\dots a_1\underline{a_0},
\end{equation*}
because all the elements of $\Lambda_n$ will appear periodically as subsequences of $(\alpha^s)_p$ on these positions, when $s$ changes. This is captured in the main theorem of the section.
\begin{theorem}\label{ck}
	Let $p$ be an odd prime and $\alpha>1$ be given such that $(p, \alpha)=1$. Further, let $\tau>0$ be the integer satisfying $p^{\tau} \mid\mid \alpha^\gamma-1$. Then for any $k\geq 0$
	\begin{equation*}
		\left\{c_{\tau, k}((\alpha^b)_p) \mid 0\leq b < \gamma p^k \right\} = \Lambda_k.
	\end{equation*}
\end{theorem}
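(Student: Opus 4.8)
The plan is to show that $b \mapsto c_{\tau,k}((\alpha^b)_p)$ is a bijection from $\{0,1,\dots,\gamma p^k-1\}$ onto $\Lambda_k$, proceeding by induction on $k$. Two preliminary observations reduce the theorem to a surjectivity statement. Since $\sigma_0$ ranges over $\delta$ and each of $\sigma_1,\dots,\sigma_k$ ranges over $\{0,\dots,p-1\}$, we have $|\Lambda_k| = |\delta|\,p^k = \gamma p^k$, which is exactly the number of admissible values of $b$. Moreover the last digit $(\alpha^b)_p[0] = \alpha^b \bmod p$ always lies in $\delta$, so $c_{\tau,k}((\alpha^b)_p) \in \Lambda_k$ for every $b$; that is, the image is automatically contained in $\Lambda_k$. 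As domain and target are finite of equal size, it therefore suffices to prove surjectivity. The base case $k=0$ is immediate: $c_{\tau,0}((\alpha^b)_p) = \langle \alpha^b \bmod p\rangle$, and as $b$ runs through $0,\dots,\gamma-1$ the residue $\alpha^b \bmod p$ runs through all of $\delta$.

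For the inductive step I would assume the statement for $k$ and write an arbitrary $b \in \{0,\dots,\gamma p^{k+1}-1\}$ as $b = b' + \gamma p^k r$ with $0 \le b' < \gamma p^k$ and $0 \le r < p$. By Lemma \ref{dobbelMid} we have $p^{\tau+k} \mid\mid \alpha^{\gamma p^k}-1$, so writing $\alpha^{\gamma p^k} = 1 + v p^{\tau+k}$ with $(v,p)=1$ gives $\alpha^b = \alpha^{b'}(1+vp^{\tau+k})^r$. In particular $\alpha^b \equiv \alpha^{b'} \pmod{p^{\tau+k}}$, so $\alpha^b$ and $\alpha^{b'}$ share all base-$p$ digits in positions $0,1,\dots,\tau+k-1$; hence the lower part of $c_{\tau,k+1}((\alpha^b)_p)$ --- namely the digits at positions $0,\tau,\dots,\tau+k-1$ --- equals $c_{\tau,k}((\alpha^{b'})_p)$ and is independent of $r$.

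It then remains to understand the single new digit at position $\tau+k$ as $r$ varies. Expanding $(1+vp^{\tau+k})^r$ and discarding terms divisible by $p^{\tau+k+1}$ (legitimate since $2(\tau+k) \ge \tau+k+1$ as $\tau \ge 1$) yields $\alpha^b \equiv \alpha^{b'} + d_0\,v\,r\,p^{\tau+k} \pmod{p^{\tau+k+1}}$, where $d_0 = \alpha^{b'}\bmod p$. Reading off the digit in position $\tau+k$ gives
\begin{equation*}
 (\alpha^b)_p[\tau+k] = \big((\alpha^{b'})_p[\tau+k] + d_0 v r\big) \bmod p .
\end{equation*}
Since $d_0$ and $v$ are both units modulo $p$, the map $r \mapsto (\alpha^b)_p[\tau+k]$ is a bijection of $\{0,\dots,p-1\}$ onto itself. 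Fixing any $\lambda \in \Lambda_k$, the inductive hypothesis supplies a $b'$ with $c_{\tau,k}((\alpha^{b'})_p) = \lambda$, and letting $r$ run over $0,\dots,p-1$ produces every sequence $\langle \sigma, \lambda\rangle$ with $0 \le \sigma < p$ while keeping the lower part equal to $\lambda$. As $\lambda$ ranges over $\Lambda_k$ these exhaust $\Lambda_{k+1}$, giving surjectivity and completing the induction.

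The crux of the argument is the inductive step, and specifically the point that multiplying by $(\alpha^{\gamma p^k})^r$ perturbs $\alpha^{b'}$ only from position $\tau+k$ upward while leaving positions $0,\dots,\tau+k-1$ frozen; this is what allows the top digit to be adjusted freely, and it is precisely what Lemma \ref{dobbelMid} (the sharp divisibility $p^{\tau+k}\mid\mid \alpha^{\gamma p^k}-1$) is designed to supply. The only subtlety I anticipate is the carry bookkeeping: one must verify that reducing modulo $p^{\tau+k+1}$ genuinely isolates the digit at position $\tau+k$, which is why only $d_0 = \alpha^{b'}\bmod p$ --- and not the full value of $\alpha^{b'}$ --- enters the formula for the new digit.
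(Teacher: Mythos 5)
Your proof is correct, but it proceeds along a genuinely different route from the paper's. The paper proves \emph{injectivity} and then counts: assuming $c_{\tau,k}((\alpha^b)_p)=c_{\tau,k}((\alpha^c)_p)$ with $0\leq b<c<\gamma p^k$, it uses the fact that the block of digits $[\tau-1:0]$ is periodic in the exponent with least period $\gamma$ (so agreement in digit $0$ forces agreement in the whole skipped block) to conclude $\alpha^b\equiv\alpha^c\pmod{p^{\tau+k}}$, which contradicts the order statement $\ord_{p^{\tau+k}}(\alpha)=\gamma p^k$ of Lemma \ref{dobbelMid}; equality of cardinalities $\vert T\vert=\gamma p^k=\vert\Lambda_k\vert$ then yields the set equality. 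You instead prove \emph{surjectivity} directly, by induction on $k$: writing $b=b'+\gamma p^k r$ and using the sharp divisibility $p^{\tau+k}\mid\mid\alpha^{\gamma p^k}-1$ (the other half of Lemma \ref{dobbelMid}), you show multiplication by $(\alpha^{\gamma p^k})^r$ freezes digits $0,\dots,\tau+k-1$ and translates the digit at position $\tau+k$ by $r$ times the unit $d_0v$ modulo $p$, so the top digit can be prescribed freely. Your digit bookkeeping is sound: reducing modulo $p^{\tau+k+1}$ does isolate position $\tau+k$, the discarded binomial terms are indeed divisible by $p^{\tau+k+1}$ since $2(\tau+k)\geq\tau+k+1$, and both $d_0$ and $v$ are units. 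Each approach has its advantages: the paper's argument is shorter and needs no induction, while yours is constructive --- it exhibits, digit by digit, an exponent realising any prescribed sequence of $\Lambda_k$, and it explains structurally \emph{why} the system of sequences occurs (the multiplier $\alpha^{\gamma p^k}$ acts as a unit translation on exactly one new digit); injectivity then also follows from your argument by the same cardinality count. A minor observation: your inductive step essentially re-derives inline the binomial computation that the paper has already isolated as Lemma \ref{dobMid}, so you could shorten it by citing that lemma's proof technique directly, and note that your surjectivity claim never needs the periodicity bridge across the skipped digits $1,\dots,\tau-1$, which is the one delicate point in the paper's injectivity argument.
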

\begin{proof}
Let $T:= \left\{c_{\tau, k}((\alpha^b)_p) \mid 0\leq b < \gamma p^k \right\}$.

Clearly, $T\subseteq \Lambda_k$ since every member of $T$ is of the form 
\begin{equation*}
  \langle \sigma_k, \sigma_{k-1}, \dots, \sigma_{1}, \sigma_0 \rangle,
\end{equation*}
where $0\leq \sigma_i < p$ for $1\leq i \leq k$ and $\sigma_0 \in \delta$, because $(\alpha^b)_p[0]\in \delta$ for any $b\geq 0$.

We now prove $T=\Lambda_k$, by showing $\vert T\vert = \gamma p^k = \vert \Lambda_k\vert$, where the last equality already follows from the definition of $\Lambda_k$.

Since $p^{\tau}\mid\mid \alpha^\gamma-1$ both $(\alpha^b)_p[\tau-1:0]$ and $(\alpha^b)_p[0]$ are periodic with respect to $b$ with least period $\gamma$ and no repetitions in the period. This means that for $b, c\geq 0$ we have 
$(\alpha^b)_p[\tau-1:0] = (\alpha^c)_p[\tau-1:0]$ if and only if $(\alpha^b)_p[0] = (\alpha^c)_p[0]$.

Now, assume for contradiction that $c_{\tau, k}((\alpha^b)_p)=c_{\tau, k}((\alpha^c)_p)$ for some $0\leq b<c<\gamma p^k$. Since $(\alpha^b)_p[0] = (\alpha^c)_p[0]$ we have $(\alpha^b)_p[\tau-1:0] = (\alpha^c)_p[\tau-1:0]$, so $(\alpha^b)_p[\tau+k-1:0]=(\alpha^c)_p[\tau+k-1:0]$, i.e. $\alpha^b\equiv \alpha^c \pmod{p^{\tau+k}}$. 
Therefore, $p^{\tau+k}\mid \alpha^{b}(\alpha^{c-b}-1)$, but this means that $p^{\tau+k}\mid\alpha^{c-b}-1$ contradicting Lemma \ref{dobbelMid} since $0<c-b<\gamma p^k$. 

Thus, all the elements in the definition of $T$ are different, and $\vert T\vert = \gamma p^k$.
\end{proof}

\subsubsection{Wieferich primes}
The main result of the section has a curious corollary related to the Wieferich primes.
\begin{definition}
Let $p$ be a prime and $\alpha>1$ be given such that $(\alpha, p)=1$. Then $p$ is a  Wieferich prime base $\alpha$ if $ p^2 \mid \alpha^\gamma -1 $.
\end{definition}
Since numerics \cite{Wie-num} indicate that for any $\alpha>1$ the Wieferich primes base $\alpha$ are somewhat scarce, it is interesting that the following elegant property holds for any  $(p, \alpha)$ such that $p$ is not a Wieferich prime base $\alpha$.

\begin{corollary}
Let $p$ be a prime which is not a Wieferich prime base $\alpha$.
Then 
\begin{equation*}
	\left\{(\alpha^b)_p[k:0]\mid 0\leq b < \gamma p^k\right\} = \Lambda_k. 
\end{equation*}
\end{corollary}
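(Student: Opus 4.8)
The plan is to recognize that this corollary is simply the specialization of Theorem \ref{ck} to the case $\tau = 1$, and that the non-Wieferich hypothesis is exactly what forces $\tau = 1$. So the bulk of the work is unwinding definitions rather than proving anything new.

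First I would translate the hypothesis. By definition $\gamma = \ord_p(\alpha)$, so $p \mid \alpha^\gamma - 1$ always holds. The statement that $p$ is \emph{not} a Wieferich prime base $\alpha$ means precisely $p^2 \nmid \alpha^\gamma - 1$. Combining these two facts gives $p^1 \mid\mid \alpha^\gamma - 1$, which is to say that the integer $\tau$ appearing in Theorem \ref{ck} equals $1$. This is the only place the Wieferich condition enters.

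Next I would compare the two sequence-extraction operations. Substituting $\tau = 1$ into the definition of $c_{\tau,k}$ yields
\begin{equation*}
  c_{1,k}(a) = \langle a_{k}, a_{k-1}, \dots, a_{1}, a_0 \rangle,
\end{equation*}
since the general form $\langle a_{\tau+k-1}, \dots, a_{\tau}, a_0\rangle$ collapses: the index block runs from $\tau+k-1 = k$ down to $\tau = 1$, and then appends $a_0$ with \emph{no} gap (the skipped positions $1,\dots,\tau-1$ are empty when $\tau=1$). But $\langle a_k, \dots, a_1, a_0\rangle$ is by definition exactly $(a)_p[k:0]$. Hence $c_{1,k}((\alpha^b)_p) = (\alpha^b)_p[k:0]$ for every $b \geq 0$, and the two indexed sets in question are literally equal term by term.

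Finally I would invoke Theorem \ref{ck} directly: with $\tau = 1$ it states
\begin{equation*}
  \left\{c_{1,k}((\alpha^b)_p) \mid 0 \leq b < \gamma p^k\right\} = \Lambda_k,
\end{equation*}
and substituting the identification $c_{1,k}((\alpha^b)_p) = (\alpha^b)_p[k:0]$ from the previous step gives the claim. There is no real obstacle here—the only thing to be careful about is the collapse of the $c_{\tau,k}$ notation when $\tau=1$, namely verifying that the separately-listed digit $a_0$ becomes contiguous with the block $a_\tau,\dots,a_{\tau+k-1}$ rather than leaving a gap, so that $c_{1,k}$ recovers the honest consecutive digit string $[k:0]$. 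This contiguity is exactly the ``occurs on every digit'' phenomenon advertised in the introduction.
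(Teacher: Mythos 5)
Your proposal is correct and follows exactly the same route as the paper's own proof: observe that the non-Wieferich hypothesis combined with $p \mid \alpha^\gamma - 1$ gives $p^1 \mid\mid \alpha^\gamma - 1$ (so $\tau = 1$), note that $c_{1,k}(a) = a[k:0]$, and apply Theorem \ref{ck}. The paper states this in two lines; your version merely spells out the same two observations in more detail, which is a fine (and arguably more careful) rendering of the identical argument.
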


\begin{proof}
Since $p$ is not a Wieferich prime base $\alpha$, we have $ p^1 \mid \mid \alpha^\gamma $. Noticing that $c_{1, k}(a)=a[k:0]$ the corollary follows directly from Theorem \ref{ck}. 
\end{proof}

Thus, $p$ not being a Wieferich prime base $\alpha$ implies that the first $k+1$ digits of $(\alpha^s)_p$ will form all sequences of $\Lambda_k$ periodically as $s$ varies.

\subsection{Bounds on $\seq_p^n$}
The findings of the previous section allow us to obtain various bounds on the function $\seq_p^n$. First we introduce a lemma, which is a step on the way to bounding $\seq_p^n$ for $n=1$.

\begin{lemma}\label{SsP1}
Let $s, t\geq 0$, $p$ be a prime, and $\gamma=\ord_p(\alpha)$. Then we have 	
\begin{equation*}
  \seq_p^1(s \gamma p^t) \leq s \theta \left( \frac{p+1}{2} \right)^t.
\end{equation*}
\end{lemma}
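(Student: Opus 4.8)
The plan is to reduce the count defining $\seq_p^1(s\gamma p^t)$ to a count of admissible sequences in $\Lambda_t$, and then read off that count from Theorem \ref{ck}. Recall that $\seq_p^1(a)$ counts those $0\leq b<a$ for which $(\alpha^b)_p$ has fewer than one large digit, i.e.\ all of whose digits are small. Let $\tau>0$ be the integer with $p^{\tau}\mid\mid\alpha^{\gamma}-1$, as in Theorem \ref{ck}. The crucial observation is that if every digit of $(\alpha^b)_p$ is small, then in particular the digits selected by $c_{\tau,t}((\alpha^b)_p)=\langle a_{\tau+t-1},\dots,a_{\tau},a_0\rangle$ are all small; hence having $c_{\tau,t}((\alpha^b)_p)$ consist entirely of small entries is a necessary condition, and
\begin{equation*}
  \seq_p^1(s\gamma p^t)\leq \#\left\{0\leq b<s\gamma p^t \mid c_{\tau,t}((\alpha^b)_p)\text{ has only small entries}\right\}.
\end{equation*}

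First I would establish that $c_{\tau,t}((\alpha^b)_p)$ is periodic in $b$ with period $\gamma p^t$. Indeed, $c_{\tau,t}$ picks out only digits of index at most $\tau+t-1$, so it depends solely on $\alpha^b \bmod p^{\tau+t}$, and by Lemma \ref{dobbelMid} we have $\ord_{p^{\tau+t}}(\alpha)=\gamma p^t$, giving the claimed period. This lets me split $[0,s\gamma p^t)$ into the $s$ consecutive blocks $[j\gamma p^t,(j+1)\gamma p^t)$ for $0\leq j<s$, on each of which $b\mapsto c_{\tau,t}((\alpha^b)_p)$ takes exactly the same values as on $[0,\gamma p^t)$. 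By Theorem \ref{ck} these values are precisely $\Lambda_t$, each occurring exactly once. Thus on every block the number of $b$ for which $c_{\tau,t}((\alpha^b)_p)$ has only small entries equals the number of elements of $\Lambda_t$ all of whose entries are small.

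It remains to count the latter. An element $\langle\sigma_t,\dots,\sigma_1,\sigma_0\rangle\in\Lambda_t$ has all entries small exactly when $\sigma_0\in\delta$ is small and each $\sigma_i$ with $1\leq i\leq t$ satisfies $\sigma_i<p/2$. There are $\theta$ choices for $\sigma_0$ by the definition of $\theta$, and, since $p$ is odd, the small digits are $0,1,\dots,\tfrac{p-1}{2}$, i.e.\ $\tfrac{p+1}{2}$ choices for each of the $t$ coordinates $\sigma_1,\dots,\sigma_t$. Hence there are $\theta\left(\frac{p+1}{2}\right)^t$ such sequences. Summing this bound over the $s$ blocks yields $\seq_p^1(s\gamma p^t)\leq s\,\theta\left(\frac{p+1}{2}\right)^t$, as desired. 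The one point requiring care is that Theorem \ref{ck} is stated only for the initial block $[0,\gamma p^t)$; the main step of the argument is therefore the periodicity reduction above, which transfers the exact enumeration of $\Lambda_t$ to every block $[j\gamma p^t,(j+1)\gamma p^t)$ and thereby produces the factor $s$.
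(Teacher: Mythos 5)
Your proof is correct and follows essentially the same route as the paper's: count the all-small sequences in $\Lambda_t$ (there are $\theta\left(\frac{p+1}{2}\right)^t$ of them), apply Theorem \ref{ck} on the initial block $[0,\gamma p^t)$, and use the periodicity coming from Lemma \ref{dobbelMid} (i.e.\ $\ord_{p^{\tau+t}}(\alpha)=\gamma p^t$) to transfer that enumeration to each of the $s$ blocks $[r\gamma p^t,(r+1)\gamma p^t)$. The only difference is presentational — you spell out the periodicity reduction and the counting of small sequences in slightly more detail than the paper does.
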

\begin{proof}
	The number of sequences of $\Lambda_{t}$ containing only small digits is $\theta \left( \frac{p+1}2 \right)^t$. Thus, by Theorem \ref{ck} there are at most $\theta \left( \frac{p+1}2 \right)^t$ integers $0\leq h<\gamma p^t$, such that $(\alpha^h)_p$ does not contain any large digits. Now, letting $p^{\tau}\mid\mid \alpha^\gamma-1$ we have by Lemma \ref{dobbelMid} that the last $\tau+t-1$ digits of $(\alpha^h)_p$ are periodic with respect to $h$ with least period $\gamma p^{t}$ and no repetition in the period. Thus,
	\begin{equation*}
		\Lambda_t = \left\{c_{\tau, t}((\alpha^b)_p) \mid 0\leq b < \gamma p^t \right\} = \left\{c_{\tau, t}((\alpha^b)_p) \mid r \gamma p^t \leq b < (r+1)\gamma p^t \right\}
	\end{equation*}
	for every $r\in \N_0$, and we can see that there are at most $\theta \left( \frac{p+1}2 \right)^t$ integers $r\gamma p^t\leq h<(r+1)\gamma p^t$ such that $(\alpha^h)_p$ does not contain any large digits.
	
	This yields
	\begin{equation*}
  \seq_p^1(s \gamma p^t) \leq s \theta \left( \frac{p+1}{2} \right)^t.
\end{equation*}
\end{proof}
Now, the following theorem improves a result by Narkiewicz \cite{Nark} by a constant factor.

\begin{theorem} \label{S31}
Let $\alpha \equiv 2 \pmod{3}$ in the definition of $\seq$. For every $a\in \N$ we have
\begin{equation*}
  \seq_3^1(a) \leq 1.3 a^{\log_3(2)}.
\end{equation*}
\end{theorem}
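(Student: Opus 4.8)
The plan is to specialise Lemma~\ref{SsP1} to the case $p=3$ and then cover an arbitrary $a$ by the smallest convenient interval of the form $[0,\, s\gamma 3^t)$. First I would record the relevant constants: since $\alpha\equiv 2\pmod 3$ we have $\delta=\{1,2\}$, hence $\gamma=\ord_3(\alpha)=2$, and as $2$ is the only large digit while $1$ is the only small residue in $\delta$, we get $\theta=1$. With $(p+1)/2=2$, Lemma~\ref{SsP1} then reads $\seq_3^1(2s\cdot 3^t)\le s\,2^t$ for all $s,t\ge 0$.

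Next I would exploit that $\seq_3^1$ is non-decreasing in its argument, together with the identity $2^t=(3^t)^{\log_3 2}$ (equivalently $a^{\log_3 2}=2^{\log_3 a}$). Given $a\in\N$, set $t=\lfloor\log_3 a\rfloor$, so that $3^t\le a<3^{t+1}$, and split into two cases according to where $a$ sits in this interval.

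In the first case $a\le 2\cdot 3^t$, and taking $s=1$ gives $\seq_3^1(a)\le\seq_3^1(2\cdot 3^t)\le 2^t=(3^t)^{\log_3 2}\le a^{\log_3 2}$, which is even stronger than required. In the second case $2\cdot 3^t<a<3^{t+1}$, so I would take $s=2$, yielding $\seq_3^1(a)\le\seq_3^1(4\cdot 3^t)\le 2^{t+1}$; on the other hand $a>2\cdot 3^t$ gives $a^{\log_3 2}>(2\cdot 3^t)^{\log_3 2}=2^{\log_3 2}\,2^t$, whence $1.3\,a^{\log_3 2}>1.3\cdot 2^{\log_3 2}\,2^t$.

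The crux of the argument, and the step that pins down the constant $1.3$, is the numerical inequality $1.3\cdot 2^{\log_3 2}\ge 2$. A direct evaluation gives $2^{\log_3 2}\approx 1.5486$, whence $1.3\cdot 2^{\log_3 2}\approx 2.013>2$, so in the second case $1.3\,a^{\log_3 2}>2\cdot 2^t=2^{t+1}\ge\seq_3^1(a)$, completing the bound. I expect this tight numerical margin to be the main obstacle, since it is only barely satisfied: a cruder covering (for instance always taking $s=1$ and $t=\lceil\log_3 a\rceil$) yields only the constant $2$, and the whole improvement over Narkiewicz rests on choosing $t=\lfloor\log_3 a\rfloor$ and absorbing the overshoot into the factor $s\in\{1,2\}$, which is precisely what forces the sharp constant.
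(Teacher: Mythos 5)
Your proof is correct and follows essentially the same route as the paper: both specialise Lemma~\ref{SsP1} to $p=3$ (where $\gamma=2$, $\theta=1$), use monotonicity of $\seq_3^1$, and cover $a$ by an interval of the form $[2s\cdot 3^t,\,2(s+1)\cdot 3^t]$ with $s\in\{1,2\}$, the binding numerical fact in both cases being $2\cdot 2^{-\log_3 2}\approx 1.29\le 1.3$ (equivalently your $1.3\cdot 2^{\log_3 2}>2$).
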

\begin{proof}
 The theorem obviously holds for $a=1$. Now consider an $a\geq 2$, and let $s, t$ be given such that $s \in \{1, 2 \}$ and
 \begin{equation*}
  s \cdot 2\cdot 3^t \leq a \leq (s +1)\cdot 2\cdot 3^t.
\end{equation*}
We now have
\begin{equation*}
  t \leq \log_3(a)-\log_3(2s),
\end{equation*}
and since $S_3^1$ clearly is weakly increasing and by Lemma \ref{SsP1}, we get
\begin{align*}
\seq_3^1(a) & \leq \seq_3^1\left( (s+1)\cdot 2\cdot 3^t \right)\\
            & \leq (s + 1)\cdot 2^t \\
            & \leq (s + 1)\cdot 2^{-\log_3(2s)}\cdot 2^{\log_3(a)}.
\end{align*}
For $s \in \{1, 2\}$ the constant $(s + 1)\cdot 2^{-\log_3(2s)}$ is maximised by $s = 1$, and so
\begin{align*}
\seq_3^1(a) &\leq 2\cdot 2^{-\log_3(2)}\cdot 2^{\log_3(a)}\\
            &\leq 1.3a^{\log_3(2)}.
\end{align*}
\end{proof}
\noindent
The function $\seq_m^1$ for $m >2$ is studied by R. E. Kennedy and C. Cooper \cite{generalisering}, and if we consider only the cases when $m$ is a prime, we get the following improvement of their results which replaces a factor increasing with $m$ with a constant.

\begin{theorem}\label{SP1}
Let $p$ be a prime and $\alpha$ arbitrary in the definition of $\seq$. Then for all $a\in \N$ 
\begin{equation*}
  \seq_p^1(a) \leq 4 a^{\log_p \left (\frac{p+1}{2} \right)}.
\end{equation*}
\end{theorem}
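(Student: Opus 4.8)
The plan is to follow the blueprint of Theorem~\ref{S31}, replacing the explicit data of the base~$3$ case by the general quantities $\gamma$ and $\theta$. Write $\beta=\log_p\!\left(\frac{p+1}{2}\right)$, so that $\left(\frac{p+1}{2}\right)^t=(p^t)^\beta$. For $a=1$ the claim is immediate, and when $1\le a<\gamma$ one may use the trivial bound $\seq_p^1(a)\le a$ together with $a^{1-\beta}<(p-1)^{1-\beta}<2$, established below. So assume $a\ge\gamma$ and choose the largest $t\ge0$ with $\gamma p^t\le a$; then $s:=\lfloor a/(\gamma p^t)\rfloor$ lies in $\{1,\dots,p-1\}$ and $s\gamma p^t\le a<(s+1)\gamma p^t$. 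Since $\seq_p^1$ is weakly increasing, Lemma~\ref{SsP1} (applied with multiplier $s+1$) gives
\begin{equation*}
 \seq_p^1(a)\le\seq_p^1\!\left((s+1)\gamma p^t\right)\le(s+1)\,\theta\left(\frac{p+1}{2}\right)^t.
\end{equation*}

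Next I would convert the right-hand side into a power of $a$. From $s\gamma p^t\le a$ we get $p^t\le a/(s\gamma)$, hence $\left(\frac{p+1}{2}\right)^t=(p^t)^\beta\le\left(\frac{a}{s\gamma}\right)^\beta$, and therefore
\begin{equation*}
 \seq_p^1(a)\le\frac{(s+1)\,\theta}{(s\gamma)^\beta}\,a^\beta.
\end{equation*}
The whole theorem thus reduces to the uniform bound $\dfrac{(s+1)\,\theta}{(s\gamma)^\beta}\le4$ for all admissible $s\in\{1,\dots,p-1\}$ and all orders $\gamma$ with $\theta$ small residues. I would factor this constant as $\frac{s+1}{s^\beta}\cdot\frac{\theta}{\gamma^\beta}$ and bound the two factors separately. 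For the second factor the estimate is clean: since $\theta\le\gamma$ we have $\frac{\theta}{\gamma^\beta}\le\gamma^{1-\beta}\le(p-1)^{1-\beta}$, and because $p^\beta=\frac{p+1}{2}$,
\begin{equation*}
 (p-1)^{1-\beta}<p^{1-\beta}=\frac{p}{p^\beta}=\frac{2p}{p+1}<2.
\end{equation*}
It therefore suffices to show that the first factor satisfies $\frac{s+1}{s^\beta}\le2$ for every $1\le s\le p-1$.

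The hard part is exactly this last inequality, because it has essentially no slack: as $p\to\infty$ one has $\beta\to1$ and both factors tend to $2$, so the product tends to $4$ and the constant in the theorem is sharp in the limit. Crude estimates (for instance $s+1\le2s$, which would only give the weaker constant $8$) are not enough. Instead I would argue that the function $s\mapsto 2s^{\beta}-(s+1)$ is concave on $[1,p-1]$ (its second derivative is $2\beta(\beta-1)s^{\beta-2}<0$ since $\beta<1$), so it attains its minimum over the interval at an endpoint; as it vanishes at $s=1$, the inequality $\frac{s+1}{s^\beta}\le2$ for all $s$ is equivalent to its value at $s=p-1$, namely $2(p-1)^\beta\ge p$. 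This single inequality, equivalent to $\log_p\!\big(\tfrac{p+1}{2}\big)\cdot\log_p(p-1)\ge\log_p\!\big(\tfrac{p}{2}\big)$, holds for every odd prime $p$ but only by a margin of order $1/p$; I expect to settle it by checking the few small primes directly and, for large $p$, by expanding both sides using $\beta=1-\frac{\ln(2p/(p+1))}{\ln p}$ together with the elementary bounds $\tfrac1p<\ln\tfrac{p}{p-1}$ and $\ln\tfrac{p+1}{p}<\tfrac1p$. Combining $\frac{s+1}{s^\beta}\le2$ with the clean bound $\frac{\theta}{\gamma^\beta}<2$ then yields $\seq_p^1(a)<4a^\beta$, completing the proof.
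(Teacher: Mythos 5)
Your proposal follows the paper's proof almost line by line: the same case split at $a<\gamma$, the same choice of $s,t$ with $s\gamma p^t\leq a<(s+1)\gamma p^t$, the same application of Lemma \ref{SsP1} with multiplier $s+1$, and the same factorisation of the constant into $\frac{s+1}{s^\beta}\cdot\frac{\theta}{\gamma^\beta}$ with the clean bound $\frac{\theta}{\gamma^\beta}\leq\gamma^{1-\beta}\leq p^{1-\beta}=\frac{2p}{p+1}<2$. Up to that point everything is correct. The gap is in the final step: having restricted $s$ to $\{1,\dots,p-1\}$, you reduce $\frac{s+1}{s^\beta}\leq 2$ to the endpoint inequality $2(p-1)^\beta\geq p$, and this you do not prove --- you only announce a plan (``check the few small primes directly, expand for large $p$''). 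The inequality is true and your plan can be made to work, but as written the proof is incomplete exactly at the step you yourself flag as the hard part, and the estimates there are genuinely delicate since the margin is only of order $1/p$.

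The difficulty is self-inflicted, and the paper's proof shows how to avoid it entirely. Your function $g(s)=2s^\beta-(s+1)$ is concave and satisfies $g(1)=0$; the key fact you leave unused is that also $g(p)=2p^\beta-(p+1)=0$ \emph{exactly}, because $p^\beta=\frac{p+1}{2}$ by the very definition of $\beta$. A concave function vanishing at both endpoints of $[1,p]$ is nonnegative on all of $[1,p]$, hence in particular on $[1,p-1]$. Equivalently (this is how the paper phrases it, via the sign of $\frac{d}{ds}\bigl(\frac{s+1}{s^\beta}\bigr)=s^{-\beta-1}(s(1-\beta)-\beta)$), the function $\frac{s+1}{s^\beta}$ decreases then increases on $[1,p]$, so its maximum there is attained at $s=1$ or $s=p$, and both endpoint values equal exactly $2$. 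This single observation replaces your entire asymptotic analysis of $2(p-1)^\beta\geq p$ and closes the proof with the constant $2\cdot 2=4$, which, as you correctly note, is sharp in the limit $p\to\infty$.
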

\begin{proof}
The theorem holds for $a< \gamma$ since $a<4 a^{\log_p \left (\frac{p+1}{2} \right)}$ for $a<p$.

\noindent
Now let $a\geq \gamma$ and $s, t$ be integers with $0<s<p$ such that
\begin{equation*}
  s \gamma p^t \leq a < (s+1)\gamma p^t.
\end{equation*}
Now
\begin{equation*}
  t \leq \log_p(a)-\log_p(s\gamma),
\end{equation*}
and letting $\mu = \log_p\left( \frac{p+1}{2} \right)$ we get, by Lemma \ref{SsP1},
\begin{align*}
\seq_p^1(a) & \leq \seq_p^1((s+1) \gamma  p^t)\\
            & \leq (s+1) \theta \left( \frac{p+1}{2} \right)^t\\
            & \leq (s+1) \theta\left( \frac{p+1}{2} \right)^{\log_p(a)-\log_p(s\gamma)}\\
            & =    (s+1) \theta \left( s\gamma \right)^{-\mu} a^\mu.
\end{align*}
Since $\theta \leq \gamma<p$ we get
\begin{align*}
  \seq_p^1(a) & \leq \frac{s+1}{s^\mu}\gamma^{1-\mu} a^\mu\\
              & \leq \frac{s+1}{s^\mu} p^{1-\mu} a^\mu \\
              & =   \frac{s+1}{s^\mu} \frac{2p}{p+1} a^\mu.
\end{align*}
Considering $\frac{s+1}{s^\mu}$ we see that $\frac{d}{ds}\left( \frac{s+1}{s^\mu} \right) = s^{-\mu-1}(s(1-\mu)-\mu)$, and thus $\frac{s+1}{s^\mu}$ is strictly decreasing for $s\in\left[ 1, \frac{\mu}{1-\mu} \right[$ and strictly increasing for $s\in\left]\frac{\mu}{1-\mu}, p \right]$ and consequently attains its maximum on $[1, p]$ either at 1 or $p$. Since $s=1, s=p$ both yield $\frac{1+1}{1^{\mu}} = \frac{p+1}{p^{\mu}}=2$, we get
\begin{equation*}
  \seq_p^1(a) \leq 4a^\mu.
\end{equation*}
\end{proof}

\noindent
Finally we generalise our observations regarding $\seq_p^n$.
\begin{lemma}\label{Spn-inter}
Let $s\geq 0$, $t\geq 1$, $p$ be a prime, and $\gamma = \ord_p(\alpha)$. Then we have
\begin{equation*}
  \seq_p^n(s \gamma p^t) \leq 2 s \gamma t^{n-1} \left( \frac{p+1}{2} \right)^t.
\end{equation*}
\end{lemma}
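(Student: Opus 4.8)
The plan is to mimic the proof of Lemma \ref{SsP1}, replacing the event ``all digits small'' by ``fewer than $n$ large digits'' and inserting a binomial count. Write $q=\frac{p+1}{2}$ for the number of small admissible values in $\{0,\dots,p-1\}$ and fix $\tau>0$ with $p^{\tau}\mid\mid\alpha^{\gamma}-1$. Since the digits selected by $c_{\tau,t}$ form a subsequence of $(\alpha^h)_p$ (at the distinct positions $0,\tau,\tau+1,\dots,\tau+t-1$), the large digits of $c_{\tau,t}((\alpha^h)_p)$ are a subset of those of $(\alpha^h)_p$. Hence if $(\alpha^h)_p$ has fewer than $n$ large digits in total, then $c_{\tau,t}((\alpha^h)_p)\in\Lambda_t$ has fewer than $n$ large digits as well, so $\seq_p^n(s\gamma p^t)$ is bounded by the number of $0\le h<s\gamma p^t$ for which $c_{\tau,t}((\alpha^h)_p)$ has fewer than $n$ large digits.

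First I would treat the periodicity exactly as in Lemma \ref{SsP1}: by Lemma \ref{dobbelMid} we have $\ord_{p^{\tau+t}}(\alpha)=\gamma p^t$, so the digits of $(\alpha^h)_p$ at positions below $\tau+t$ are periodic in $h$ with least period $\gamma p^t$ and no repetition inside a period. Combined with Theorem \ref{ck}, this shows that on each block $r\gamma p^t\le h<(r+1)\gamma p^t$ the map $h\mapsto c_{\tau,t}((\alpha^h)_p)$ is a bijection onto $\Lambda_t$. Summing over the $s$ blocks contained in $[0,s\gamma p^t)$ yields
\begin{equation*}
  \seq_p^n(s\gamma p^t)\le s\cdot\#\{\lambda\in\Lambda_t : \lambda\text{ has fewer than }n\text{ large digits}\}.
\end{equation*}

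Next I would count the admissible sequences. A sequence $\langle\sigma_t,\dots,\sigma_0\rangle$ with fewer than $n$ large digits has in particular at most $n-1$ large digits among the free positions $\sigma_1,\dots,\sigma_t$; dropping the constraint on $\sigma_0\in\delta$ (which accounts for $\gamma$ choices) over-counts and hence gives
\begin{equation*}
  \#\{\lambda\in\Lambda_t : \lambda\text{ has fewer than }n\text{ large digits}\}\le\gamma\sum_{j=0}^{n-1}\binom{t}{j}\left(\frac{p-1}{2}\right)^{j}q^{\,t-j},
\end{equation*}
since each of $\sigma_1,\dots,\sigma_t$ has $q$ small and $\frac{p-1}{2}=q-1$ large admissible values. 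Factoring out $q^t$ and using $q-1\le q$ together with $\binom{t}{j}\le t^j$ bounds the sum by $\gamma q^t\sum_{j=0}^{n-1}t^{j}$.

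The remaining step is the arithmetic estimate $\sum_{j=0}^{n-1}t^{j}\le 2t^{n-1}$ for $t\ge 1$, and this is the only place where care is required, so it is the expected main obstacle (technical rather than conceptual). For $t\ge 2$ the geometric sum equals $\frac{t^{n}-1}{t-1}\le\frac{t^{n}}{t-1}\le 2t^{n-1}$, using $t-1\ge t/2$; the degenerate case $t=1$ must instead be verified directly from $\binom{1}{j}=0$ for $j\ge 2$, which makes the relevant sum at most $2=2t^{n-1}$, rather than from the loose bound $\binom{t}{j}\le t^j$. Assembling the three estimates gives
\begin{equation*}
  \seq_p^n(s\gamma p^t)\le 2s\gamma t^{n-1}\left(\frac{p+1}{2}\right)^{t},
\end{equation*}
as claimed.
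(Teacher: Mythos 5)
Your proof is correct and follows essentially the same route as the paper's: the same block-periodicity argument via Theorem \ref{ck} and Lemma \ref{dobbelMid}, followed by a binomial count of the sequences in $\Lambda_t$ with fewer than $n$ large digits, and the same geometric-sum estimate at the end. The only differences are cosmetic---you drop the small/large case distinction on $\sigma_0$ by allowing all $\gamma$ residues at once where the paper splits into two cases, and you absorb $t=1$ into the main argument where the paper dismisses it separately as clear---so the bounds and conclusion coincide exactly.
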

\begin{proof}
For $t=1$ the result is clear. Now, assume $t>1$.

First, we count the number of sequences $\eta\in\Lambda_{t}$ such that $\eta $ contains less than $n$ large elements. This is done by counting for each $i<n$ how many sequences $\eta\in \Lambda_t$ that contain exactly $i$ large elements. 

For each $i$ we split up into two cases:\\
\textbf{Case 1:} The last element of $\eta$ is large (which means $i>0$). This element can then be chosen in $\gamma-\theta$ ways, and there are $\binom{t}{i-1}\left( \frac{p-1}{2} \right)^{i-1}\left( \frac{p+1}{2} \right)^{t+1-i}$ ways to choose the remaining $t$ elements such that exactly $i-1$ of them are large.\\
\textbf{Case 2:} The last element of $\eta$ is small. This element can then be chosen in $\theta$ ways, and there are $\binom{t}{i}\left( \frac{p-1}{2} \right)^{i}\left( \frac{p+1}{2} \right)^{t-i}$ ways to choose the remaining $t$ elements such that exactly $i$ of them are large.

Thus, we can express the number of elements in $\Lambda_{t}$ containing less than $n$ large elements by
\begin{align*}
\sum_{i=1}^{n-1}(\gamma-\theta)\binom{t}{i-1}\left( \frac{p-1}{2} \right)^{i-1}&\left( \frac{p+1}{2} \right)^{t+1-i}+
\sum_{i=0}^{n-1}\theta\binom{t}{i}\left( \frac{p-1}{2} \right)^{i}\left( \frac{p+1}{2} \right)^{t-i}\\
&\leq \gamma \left( \frac{p+1}{2} \right)^t \text{ }\sum_{i=0}^{n-1} \binom{t}{i}\\
& \leq \gamma \left( \frac{p+1}{2} \right)^t \text{ }\sum_{i=0}^{n-1} t^i\\
& \leq 2 \gamma  t^{n-1} \left( \frac{p+1}{2} \right)^t,
\end{align*}
since $t>1$.

Now, as in the proof of Lemma \ref{SsP1} we can conclude by Theorem \ref{ck} and Lemma \ref{dobbelMid} that for every $r\in \N_0$  there are at most $ 2 \gamma  t^{n-1} \left( \frac{p+1}{2} \right)^t$ integers $r \gamma p^t \leq k < (r+1)\gamma p^t$ such that $(\alpha^k)_p$ contains less than $n$ large digits. Thus, we have 
\begin{equation*}
  \seq_p^n(s \gamma p^t) \leq 2 s \gamma t^{n-1} \left( \frac{p+1}{2} \right)^t.
\end{equation*}

\end{proof}

\begin{theorem} \label{SPN}
Let $p$ be a prime and $\alpha$ arbitrary in the definition of $\seq$. Then for all $a, n\in \N$, where $a\geq \gamma p$, we have
\begin{equation*}
  \seq_p^n(a) \leq 8\log_p(a)^{n-1}a^{\log_p \left (\frac{p+1}{2} \right)}.
\end{equation*}
\end{theorem}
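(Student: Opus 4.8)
The plan is to imitate the proof of Theorem \ref{SP1} verbatim, the only difference being that the single–large–digit estimate of Lemma \ref{SsP1} is replaced by the general estimate of Lemma \ref{Spn-inter}, and that the extra factor $t^{n-1}$ produced by that lemma must be carried through the computation. The hypothesis $a\geq \gamma p$ is present precisely so that we may arrange an exponent $t\geq 1$, as Lemma \ref{Spn-inter} demands.

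First I would let $t$ be the largest integer with $\gamma p^t\leq a$; since $a\geq\gamma p$ this forces $t\geq 1$. I then choose $s$ with $s\gamma p^t\leq a<(s+1)\gamma p^t$, and observe that $a<\gamma p^{t+1}$ forces $0<s<p$. Taking logarithms in $s\gamma p^t\leq a$ gives $t\leq \log_p(a)-\log_p(s\gamma)\leq \log_p(a)$, the last inequality using $s\gamma\geq 1$. Using that $\seq_p^n$ is weakly increasing and applying Lemma \ref{Spn-inter} with $s+1$ in place of $s$,
\begin{equation*}
\seq_p^n(a)\leq \seq_p^n\!\left((s+1)\gamma p^t\right)\leq 2(s+1)\gamma\, t^{n-1}\left(\frac{p+1}{2}\right)^{t}.
\end{equation*}

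Next I would estimate the two $t$-dependent factors separately. For the polynomial factor I use the clean bound $t^{n-1}\leq \log_p(a)^{n-1}$. For the exponential factor, setting $\mu=\log_p\!\left(\frac{p+1}{2}\right)$ so that $\left(\frac{p+1}{2}\right)^{\log_p x}=x^{\mu}$, the estimate $t\leq \log_p(a)-\log_p(s\gamma)$ together with $\frac{p+1}{2}>1$ yields $\left(\frac{p+1}{2}\right)^{t}\leq a^{\mu}(s\gamma)^{-\mu}$. Substituting both bounds collapses the estimate to $2\,\tfrac{s+1}{s^{\mu}}\,\gamma^{1-\mu}\log_p(a)^{n-1}a^{\mu}$.

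From here the argument is identical to Theorem \ref{SP1}: since $\mu<1$ and $\gamma<p$ I replace $\gamma^{1-\mu}$ by $p^{1-\mu}=\frac{2p}{p+1}$, and the function $\frac{s+1}{s^{\mu}}$ attains its maximum on $[1,p]$ at $s=1$ or $s=p$, both giving the value $2$. Combining these gives
\begin{equation*}
\seq_p^n(a)\leq 2\cdot 2\cdot\frac{2p}{p+1}\,\log_p(a)^{n-1}a^{\mu}\leq 8\,\log_p(a)^{n-1}a^{\log_p\left(\frac{p+1}{2}\right)},
\end{equation*}
as claimed. I expect no substantial obstacle: the computation mirrors the $n=1$ case, the doubling of the constant from $4$ to $8$ is simply inherited from the leading factor $2$ in Lemma \ref{Spn-inter}, and the genuinely new ingredient is only the bound $t^{n-1}\leq\log_p(a)^{n-1}$. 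The single point deserving care is verifying that $a\geq\gamma p$ really does secure $t\geq 1$, since this is exactly what licenses the application of Lemma \ref{Spn-inter}.
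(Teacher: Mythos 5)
Your proposal is correct and follows essentially the same route as the paper's own proof: the same choice of $s,t$ with $s\gamma p^t\leq a<(s+1)\gamma p^t$, the same application of Lemma \ref{Spn-inter}, the bound $t^{n-1}\leq\log_p(a)^{n-1}$, and the reuse of $\frac{s+1}{s^\mu}\gamma^{1-\mu}\leq 4$ from the proof of Theorem \ref{SP1}. The only cosmetic difference is that you re-derive that last inequality rather than cite it, which changes nothing of substance.
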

\begin{proof}
Let $a\geq \gamma p$ be given, and $s, t$ be integers with $0<s<p$ and $t\geq 1$ such that
\begin{equation*}
  s \gamma p^t \leq a < (s+1)\gamma p^t.
\end{equation*}
Now
\begin{equation*}
  t \leq \log_p(a)-\log_p(s\gamma),
\end{equation*}
and letting $\mu = \log_p\left( \frac{p+1}{2} \right)$ we use Lemma \ref{Spn-inter} and the fact that $\frac{s+1}{s^\mu} \gamma^{1-\mu}\leq 4$ from the the proof of Theorem \ref{SP1} to get 
\begin{align*}
\seq_p^n(a) &\leq \seq_p^n\left( (s+1)\gamma p^t \right) \\
            &\leq 2 (s+1) \gamma t^{n-1} \left( \frac{p+1}{2} \right)^t\\
            &\leq 2 (s+1) \gamma \left( \log_p(a)-\log_p(s\gamma) \right)^{n-1} \left( \frac{p+1}{2} \right)^{\log_p(a)-\log_p(s\gamma)}\\
            &\leq 2 (s+1) \gamma (s\gamma)^{-\mu} \log_p(a)^{n-1} a^\mu\\
            &= 2 \frac{s+1}{s^{\mu}} \gamma^{1-\mu} \log_p(a)^{n-1} a^\mu\\
            &\leq 8\log_p(a)^{n-1}a^{\log_p\left( \frac{p+1}{2} \right)}.
\end{align*}

\end{proof}

\section{Application to central binomial coefficients}
This section will apply the bounds on $\seq$ to a generalisation of Conjecture \ref{main} in order to show that the set of numbers not satisfying the conjecture  restricted to the case $n=2^s$ has asymptotic density 0.

For this we need the following theorem by Kummer.
\begin{theorem}[Kummer \cite{Kummer}]
Let $n, m\geq 0$ and $p$ be a prime. Then the greatest exponent of $p$ dividing $\binom{n+m}{m}$ is equal to the number of carries, when $n$ is added to $m$ in base $p$.
\end{theorem}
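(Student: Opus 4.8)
The plan is to reduce the statement to a computation of $p$-adic valuations via Legendre's formula, and then to translate a digit-sum identity into a count of carries. Throughout write $v_p(N)$ for the exponent of $p$ in $N$ (so that $p^{v_p(N)}\mid\mid N$), and let $s_p(N)$ denote the sum of the base-$p$ digits of $N$. Since $\binom{n+m}{m}=\frac{(n+m)!}{n!\,m!}$, the quantity we must understand is
\begin{equation*}
  v_p\!\left(\binom{n+m}{m}\right) = v_p\big((n+m)!\big) - v_p(n!) - v_p(m!).
\end{equation*}

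First I would establish Legendre's formula $v_p(N!) = \frac{N-s_p(N)}{p-1}$. This follows by counting: $v_p(N!)=\sum_{k=1}^N v_p(k)=\sum_{i\geq 1}\lfloor N/p^i\rfloor$, since for each $i$ exactly $\lfloor N/p^i\rfloor$ of the integers $1,\dots,N$ are divisible by $p^i$. Writing $N=\sum_{j\geq 0}a_jp^j$ in base $p$ and summing the geometric series $\sum_{i=1}^{j}p^{j-i}=\frac{p^j-1}{p-1}$ collapses $\sum_{i\geq 1}\lfloor N/p^i\rfloor$ to $\frac{1}{p-1}\sum_{j}a_j(p^j-1)=\frac{N-s_p(N)}{p-1}$. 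Substituting into the displayed formula and cancelling the common factor $\frac{1}{p-1}$ gives
\begin{equation*}
  v_p\!\left(\binom{n+m}{m}\right) = \frac{s_p(n)+s_p(m)-s_p(n+m)}{p-1}.
\end{equation*}

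The heart of the argument is to identify the numerator with $(p-1)$ times the number of carries. I would write $n=\sum_i a_ip^i$ and $m=\sum_i b_ip^i$ and run the schoolbook addition algorithm: set $\varepsilon_0=0$ and for each $i\geq 0$ define the output digit $d_i$ and carry $\varepsilon_{i+1}$ by $a_i+b_i+\varepsilon_i = d_i + p\,\varepsilon_{i+1}$, where $\varepsilon_{i+1}\in\{0,1\}$ because $a_i+b_i+\varepsilon_i\leq 2p-1$. Summing this relation over all $i\geq 0$, and using that $\sum_{i\geq 0}\varepsilon_i=\sum_{i\geq 0}\varepsilon_{i+1}$ since $\varepsilon_0=0$, the carry terms combine to give
\begin{equation*}
  s_p(n)+s_p(m)+C = s_p(n+m)+pC,
\end{equation*}
where $C=\sum_{i\geq 0}\varepsilon_{i+1}$ is exactly the number of positions at which a carry is produced. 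Rearranging yields $s_p(n)+s_p(m)-s_p(n+m)=(p-1)C$, and combining with the previous display gives $v_p\binom{n+m}{m}=C$, as claimed.

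The one point demanding care, and the main obstacle, is the carry analysis itself: one must verify that each carry lies in $\{0,1\}$ (so that the recursion genuinely models base-$p$ addition) and that the bookkeeping of the two carry sums telescopes correctly, which hinges on $\varepsilon_0=0$ together with the fact that all digits vanish for large $i$, making every sum finite. Once this identity is in place the theorem is immediate.
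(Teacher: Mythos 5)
The paper does not prove this theorem at all: it is quoted as a classical result with a citation to Kummer's 1852 paper, so there is no internal proof to compare yours against. That said, your argument is correct, and it is the standard modern proof: Legendre's formula in the form $v_p(N!)=\sum_{i\geq 1}\lfloor N/p^i\rfloor=\frac{N-s_p(N)}{p-1}$, the resulting digit-sum expression $v_p\binom{n+m}{m}=\frac{s_p(n)+s_p(m)-s_p(n+m)}{p-1}$, and the telescoping carry identity $s_p(n)+s_p(m)-s_p(n+m)=(p-1)C$. The details all check: the carry bound $\varepsilon_{i+1}\in\{0,1\}$ follows from $a_i+b_i+\varepsilon_i\leq 2p-1$; the sums are finite because digits and carries eventually vanish; and the one step you leave implicit --- that the recursion's output digits $d_i$ really are the base-$p$ digits of $n+m$, i.e.\ that $\sum_i d_ip^i=n+m$ with $0\leq d_i<p$, which follows by the same telescoping --- is routine and you correctly flag it as the point needing care. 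Your proof would serve as a self-contained substitute for the citation.
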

\noindent
Further we define the following function:
\begin{definition}
Let $m\in\N$ be odd. Then we define
\begin{equation*}
  \champ_m(a) = \#\left\{0\leq s<a\, \left\vert\, m \nmid \binom{2^{s+1}}{2^s} \right.\right\}.
\end{equation*}
\end{definition}
\noindent
\noindent
It is clear, that to show Conjecture \ref{main} we would have to bound $\champ_9$ by $\champ_9(a)\leq 5$ for all $a$. Instead we can get a partial result by connecting $\champ$ and $\seq$ in the following way:
\begin{lemma}
Let $a, n\in \N$, $\alpha=2$ in the definition of $\seq$, and $p$ be an odd prime. Then
\begin{equation*}
  \champ_{p^n}(a) \leq \seq_p^n(a).
\end{equation*}
\end{lemma}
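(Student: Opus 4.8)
The plan is to combine Kummer's theorem with a direct digit-by-digit analysis of the addition $2^s+2^s$ in base $p$. By Kummer's theorem the largest exponent of $p$ dividing $\binom{2^{s+1}}{2^s}=\binom{2^s+2^s}{2^s}$ equals the number of carries produced when $2^s$ is added to itself in base $p$. Hence $p^n\nmid\binom{2^{s+1}}{2^s}$ holds precisely when this number of carries is strictly less than $n$. Thus $\champ_{p^n}(a)$ counts those $0\le s<a$ for which the self-addition of $2^s$ in base $p$ yields fewer than $n$ carries, while (with $\alpha=2$) $\seq_p^n(a)$ counts those $0\le s<a$ for which $(2^s)_p$ has fewer than $n$ large digits. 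It therefore suffices to show, for each fixed $s$, that the number of large digits of $(2^s)_p$ is at most the number of carries in adding $2^s$ to itself.

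The key step is the observation that every large digit forces a carry. Write $(2^s)_p=\dots a_1a_0$. At position $i$ one adds $a_i$ to itself together with an incoming carry $c_i\in\{0,1\}$, producing a carry out exactly when $2a_i+c_i\ge p$. If $a_i$ is large, then since $p$ is odd we have $a_i\ge\frac{p+1}{2}$, so $2a_i\ge p+1>p$ and a carry is produced regardless of $c_i$. Consequently the set of positions at which carries occur contains every position carrying a large digit, so the total number of carries is at least the number of large digits of $(2^s)_p$. Small digits may also generate carries, for instance when $c_i=1$ and $a_i=\frac{p-1}{2}$, but this can only increase the carry count and hence only strengthens the inequality.

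Combining these two facts, if $0\le s<a$ produces fewer than $n$ carries then $(2^s)_p$ has fewer than $n$ large digits, so the set $\{0\le s<a:\text{carries}<n\}$ is contained in $\{0\le s<a:\#\{\text{large digits}\}<n\}$; taking cardinalities gives $\champ_{p^n}(a)\le\seq_p^n(a)$. I do not expect a serious obstacle here, since the argument is a clean inclusion of counting sets. The only point that needs care is the bookkeeping of the carry rule, and in particular the use of $p$ being odd to guarantee that a large digit strictly exceeds $\frac{p-1}{2}$ and therefore always triggers a carry independently of the incoming carry bit.
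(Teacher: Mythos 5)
Your proof is correct and follows essentially the same approach as the paper: the paper's proof is exactly the observation that each large digit of $(2^s)_p$ forces a carry when $2^s$ is added to itself, combined with Kummer's theorem. Your write-up simply makes explicit the carry bookkeeping (that $2a_i + c_i \geq p$ whenever $a_i \geq \frac{p+1}{2}$) and the resulting inclusion of counting sets, which the paper leaves implicit.
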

\begin{proof}
Adding $2^s$ to itself in base $p$ will yield at least one carry for every large digit in $(2^s)_p$. Thus, by Kummer's theorem, we must have
\begin{equation*}
  \champ_{p^n}(a) \leq \seq_p^n(a).
\end{equation*}
\end{proof}
\noindent
With this at hand it is possible to give an asymptotic upper bound on $\champ_m$ for every odd $m$.
\begin{theorem}\label{density}
Let $m>1$ be odd and let $p$ be the greatest prime dividing $m$. Then 

\begin{equation*}
  \champ_{m}(a) = o\left( a^{\log_{p}\left( \frac{p+1}{2} \right) +\epsilon } \right)
\end{equation*}
for any $\epsilon>0$.
\end{theorem}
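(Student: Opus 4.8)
The plan is to reduce the estimate for $\champ_m$ to the bound on $\seq_p^n$ from Theorem \ref{SPN} via the prime factorisation of $m$. Write $m = \prod_i p_i^{e_i}$ with the $p_i$ distinct primes, all odd since $m$ is odd, and set $p = \max_i p_i$. Because $m \mid \binom{2^{s+1}}{2^s}$ holds exactly when $p_i^{e_i} \mid \binom{2^{s+1}}{2^s}$ for every $i$, the set of $s$ with $m \nmid \binom{2^{s+1}}{2^s}$ is the union over $i$ of the sets of $s$ with $p_i^{e_i} \nmid \binom{2^{s+1}}{2^s}$. Counting over a union is subadditive, so
\begin{equation*}
  \champ_m(a) \leq \sum_i \champ_{p_i^{e_i}}(a).
\end{equation*}

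Next I would apply, for each $i$, the lemma relating $\champ$ and $\seq$ (legitimate since $\alpha = 2$ is coprime to each odd $p_i$) to get $\champ_{p_i^{e_i}}(a) \leq \seq_{p_i}^{e_i}(a)$, and then Theorem \ref{SPN} to obtain, for $a$ sufficiently large (above the threshold $\gamma p_i$ in that theorem),
\begin{equation*}
  \seq_{p_i}^{e_i}(a) \leq 8\,(\log_{p_i} a)^{e_i - 1}\, a^{\log_{p_i}\!\left( \frac{p_i+1}{2} \right)}.
\end{equation*}
The decisive observation is that the exponent $f(q) := \log_q\!\left( \frac{q+1}{2} \right)$ is strictly increasing in $q$; consequently $f(p_i) < f(p)$ for every prime factor $p_i < p$, while the maximal value $f(p)$ is realised by the greatest prime factor. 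This is precisely why the statement singles out the greatest prime $p$ dividing $m$.

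It then remains to control two kinds of contributions to the sum. For each $p_i < p$ the corresponding term is $O\!\left( (\log a)^{e_i - 1} a^{f(p_i)} \right)$ with $f(p_i) < f(p)$ strictly; since a polylogarithmic factor is $o(a^\delta)$ for every $\delta > 0$, this term is $o(a^{f(p)})$, a fortiori $o(a^{f(p) + \epsilon})$. For the single index with $p_i = p$ the term is $O\!\left( (\log a)^{e_p - 1} a^{f(p)} \right)$, and as $(\log a)^{e_p - 1} = o(a^\epsilon)$ for every fixed $\epsilon > 0$, this is again $o(a^{f(p) + \epsilon})$. A finite sum of $o(a^{f(p)+\epsilon})$ terms is $o(a^{f(p)+\epsilon})$, which yields $\champ_m(a) = o\!\left( a^{\log_p((p+1)/2) + \epsilon} \right)$ for every $\epsilon > 0$, as claimed.

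The only genuinely non-routine ingredient is the monotonicity of $f(q) = \log_q\!\left( \frac{q+1}{2} \right)$, which is what lets the largest prime govern the growth rate; I expect this to be the main (though still elementary) obstacle. It is a short calculus check: differentiating $f$, the sign is that of $q\ln q - (q+1)\big(\ln(q+1) - \ln 2\big)$, which one verifies is positive for $q \geq 3$, with $f(q) \to 1$ from below as $q \to \infty$. Everything else is bookkeeping with the union bound and absorbing polylogarithmic factors into $a^\epsilon$.
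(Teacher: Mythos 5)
Your proof is correct and follows essentially the same route as the paper's: the union bound over the prime factorisation, the lemma $\champ_{p^n}(a) \leq \seq_p^n(a)$, Theorem \ref{SPN}, and absorption of the polylogarithmic factors into $a^\epsilon$. The only difference is that you make explicit (with a calculus check) the monotonicity of $q \mapsto \log_q\left(\frac{q+1}{2}\right)$, which the paper invokes implicitly in the phrase ``since $p_i \leq p_k$''; note that non-strict monotonicity would already suffice, since the polylog factors are swallowed by $a^\epsilon$ in every term, including the one for the largest prime.
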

\begin{proof}
Assume $m$ has prime factorisation $m=p_1^{\beta_1}p_2^{\beta_2}\cdots p_k^{\beta_k}$ with $p_1<p_2<\dots<p_k$. 
Then $\seq_{p_i}^{\beta_i}(a)=O\left(\log_{p_k}(a)^{\beta_{k}-1}a^{\log_{p_k}\left( \frac{p_k+1}{2} \right)}\right)$ for all $1\leq i\leq k$, since $p_i\leq p_k$, and thus, 
\begin{align*}
\champ_m(a) &\leq \sum_{i=1}^k \seq_{p_i}^{\beta_i}(a)\\
            &= O\left(\log_{p_k}(a)^{\beta_{k}-1}a^{\log_{p_k}\left( \frac{p_k+1}{2} \right)}\right) \\
            &= o\left( a^{\log_{p_k}\left( \frac{p_k+1}{2} \right) +\epsilon } \right)
\end{align*}
for any $\epsilon>0$.
\end{proof}
\noindent
Although we still cannot give a definite answer to Conjecture \ref{main}, we do get the following corollary.
\begin{corollary}
	For every odd $m$ the set of integers $s$ such that $m\nmid \binom{2^{s+1}}{2^s}$ has asymptotic density 0.
\end{corollary}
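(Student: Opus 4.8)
The plan is to recognise that the asymptotic density appearing in the statement is precisely $\lim_{a\to\infty}\champ_m(a)/a$, since by definition $\champ_m(a)$ counts exactly those $s<a$ with $m\nmid\binom{2^{s+1}}{2^s}$. It therefore suffices to show that this limit is $0$, and I would extract that directly from Theorem \ref{density}. First I would dispose of the degenerate case $m=1$, where the set is empty and the density is trivially $0$; so I may assume $m>1$ and let $p$ be the largest prime dividing $m$.

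The crucial observation is that the exponent furnished by Theorem \ref{density} is strictly less than $1$: since $p\geq 3$ we have $\frac{p+1}{2}<p$, and hence $\log_p\left(\frac{p+1}{2}\right)<1$. This strict inequality is exactly what leaves room to absorb a small positive $\epsilon$. Concretely, I would fix $\epsilon>0$ small enough that $\delta:=\log_p\left(\frac{p+1}{2}\right)+\epsilon<1$, which is possible by the inequality above. Theorem \ref{density} then gives $\champ_m(a)=o\left(a^{\delta}\right)$, whence
\begin{equation*}
  \frac{\champ_m(a)}{a}=o\left(a^{\delta-1}\right)\to 0 \quad\text{as } a\to\infty,
\end{equation*}
because $\delta-1<0$. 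This shows the density equals $0$.

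The only point requiring any care — and it is a mild one rather than a genuine obstacle — is the verification that $\log_p\left(\frac{p+1}{2}\right)<1$, i.e.\ that the growth rate supplied by Theorem \ref{density} is genuinely sublinear. Once that is in hand, dividing by $a$ and letting $a\to\infty$ finishes the argument, and everything else is a routine unwinding of the definition of asymptotic density together with an application of the already-established Theorem \ref{density}.
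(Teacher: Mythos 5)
Your proposal is correct and follows essentially the same route as the paper: the paper's proof is the one-line observation that Theorem \ref{density} yields $\champ_m(a)=o(a)$, which is exactly your argument once one notes $\log_p\left(\frac{p+1}{2}\right)<1$ and chooses $\epsilon$ accordingly. Your write-up merely makes explicit the details (the $m=1$ case and the choice of $\epsilon$) that the paper leaves implicit.
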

\begin{proof}
	By Theorem \ref{density} we have $\champ_m(a) = o\left( a \right)$.
\end{proof}
\noindent
Since the case $m=9$ is not special in this corollary, it seems natural to pose the following conjecture, which strengthens Conjecture \ref{main}.
\begin{conjecture}
	For every odd $m$ there is an $N\in \N$ such that $m\mid \binom{2^{k+1}}{2^k}$ for every $k\geq N$.
\end{conjecture}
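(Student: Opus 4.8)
\section*{Proof proposal}

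The plan is to reduce the finiteness statement to a statement about digit counts and then to confront the point at which even the simplest instance becomes a well-known open problem. By Kummer's theorem together with the lemma $\champ_{p^n}(a)\leq \seq_p^n(a)$, the divisibility $m\mid\binom{2^{k+1}}{2^k}$ holds as soon as, for every prime power $p^{\beta}\mid\mid m$, the expansion $(2^k)_p$ contains at least $\beta$ large digits: each large digit $a_i\geq\frac{p+1}{2}$ gives $2a_i\geq p+1>p$, hence a carry irrespective of the incoming carry, and distinct large digits sit at distinct positions and so produce distinct carries. Writing $m=p_1^{\beta_1}\cdots p_r^{\beta_r}$, the conjecture therefore follows if one can prove, for each odd prime $p$ and each fixed $n$, that
\begin{equation*}
\#\{k\in\N : (2^k)_p \text{ has fewer than } n \text{ large digits}\} < \infty,
\end{equation*}
i.e. that the counting function $\seq_p^n$ is eventually constant. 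Accordingly I would first try to upgrade Theorem \ref{SPN}, which gives $\seq_p^n(a)=O\!\left(\log_p(a)^{n-1}a^{\mu}\right)$ with $\mu=\log_p\!\left(\frac{p+1}{2}\right)<1$, and hence density zero by Theorem \ref{density}; the target is to replace ``sparse'' by ``finite''.

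The structural tool is Theorem \ref{ck}: as $b$ ranges over a full period $[0,\gamma p^t)$, the window $c_{\tau,t}((\alpha^b)_p)$ realises every sequence in $\Lambda_t$ exactly once. This is exactly what drives the density estimate, since within each period the proportion of exponents whose window carries fewer than $n$ large digits is at most $\frac{2\gamma t^{n-1}(\frac{p+1}{2})^t}{\gamma p^t}=2t^{n-1}\!\left(\frac{p+1}{2p}\right)^t\to 0$ (using the count from Lemma \ref{Spn-inter}). To obtain finiteness I would instead need to certify, for every individual large $k$, that the entire expansion $(2^k)_p$ contains at least $n$ large digits. Here the periodicity machinery runs out: the window $c_{\tau,t}$ controls a growing but bounded block of $t+1$ digit positions at a fixed offset, whereas $(2^k)_p$ has length $\approx k\log_p 2$, so the controlled block is a vanishing fraction of the expansion; moreover Theorem \ref{ck} is a statement averaged over the residue $b\bmod\gamma p^t$ and says nothing that would rule out rare individual exponents whose complete digit string happens to avoid large digits.

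The main obstacle is therefore not a calculation but a genuine barrier. The case $p=3$, $n=1$ of the required finiteness statement asserts that $(2^k)_3$ contains the digit $2$ for all sufficiently large $k$, equivalently that $2^k$ omits the digit $2$ only finitely often; this is precisely the $1979$ conjecture of Erd\H{o}s quoted in the introduction, for which, as noted there, no method of attack is known. Consequently any honest proof plan must either prove Erd\H{o}s's conjecture, presumably via new input on the distribution of the base-$p$ digits of a single power $2^k$ (normality-type or Fourier-analytic control of $\{2^k/p^j\}$), or exhibit an unconditional mechanism forcing a bounded digit window to contain large digits for all large $k$, which the averaging behind Theorem \ref{ck} cannot supply. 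Since the strengthened conjecture literally contains Erd\H{o}s's conjecture as its $(p,n)=(3,1)$ case, I do not expect the periodicity/counting approach of this paper to settle it; the realistic outcome of this plan is the density-zero corollary already obtained, together with the structural observation that the full strengthening is at least as hard as Erd\H{o}s's conjecture.
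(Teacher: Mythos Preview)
This statement is a \emph{conjecture} in the paper, not a theorem: the paper offers no proof, only the density-zero corollary preceding it as motivation. Your write-up correctly recognises this and correctly identifies the obstruction, namely that the case $m=3$ (equivalently $(p,n)=(3,1)$) already contains Erd\H{o}s's 1979 conjecture that $(2^k)_3$ contains the digit $2$ for all large $k$, which the paper itself cites as open. Your reduction via Kummer and the observation that Theorem~\ref{ck} only controls a window of $t+1$ digits out of $\approx k\log_p 2$, and only on average over a period, are accurate explanations of why the paper's machinery yields $o(a)$ but not $O(1)$. In short, there is nothing to compare: the paper has no proof, and your proposal is not a proof either but a sound argument that none should be expected from these methods.
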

\noindent
It seems by Theorem \ref{ck} and by computer heuristics that the digits of $(2^s)_p$ are uniformly distributed for large $s$ in the sense that for any $0\leq a<p$ most digits in the representation have probability roughly $1/p$ of being $a$.

Assuming such a random distribution of the digits in the representation and considering computer experiments on a selection of primes $p<200$ has lead to the following conjecture. 
\begin{conjecture}\label{UF}
	For an odd prime, $p$, let $\epsilon_p(a)$ be the function satisfying $p^{\epsilon_p(a)}\mid\mid a$ for every $a$. Then 
		\begin{equation*}
		\epsilon_p\left( \binom{2^{k+1}}{2^k} \right) = \frac{\log(2)}{2\log(p)}\cdot k+O(\sqrt{k}).
	\end{equation*}
\end{conjecture}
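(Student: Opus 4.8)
The plan is to reduce the problem to a count of carries via Kummer's theorem and then to analyze that count as a sum over the base-$p$ digits of $2^k$. First I would apply Kummer's theorem, which tells us that $\epsilon_p\!\left(\binom{2^{k+1}}{2^k}\right)$ equals the number of carries produced when $2^k$ is added to itself in base $p$. Writing $(2^k)_p = \langle a_d, \dots, a_1, a_0\rangle$ and letting $c_i\in\{0,1\}$ denote the carry into position $i$ (with $c_0=0$), one checks that $2a_i + c_i \le 2(p-1)+1 < 2p$, so every carry is $0$ or $1$ and $c_{i+1}=1$ precisely when $2a_i + c_i \ge p$. Hence a carry leaves position $i$ exactly when $a_i \ge \frac{p+1}{2}$ (if $c_i=0$) or $a_i \ge \frac{p-1}{2}$ (if $c_i=1$). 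The quantity to estimate is $C_k = \sum_{i\ge 0} c_{i+1}$, while the number of base-$p$ digits of $2^k$ is $d+1 = k\log_p 2 + O(1)$.

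Next I would model the digits $a_0, a_1, \dots$ as independent and uniform on $\{0,1,\dots,p-1\}$. Under this model the carry sequence $(c_i)$ is a two-state Markov chain: from state $0$ it moves to $1$ with probability $q=\frac{p-1}{2p}$ (the density of large residues), and from state $1$ it moves to $1$ with probability $q'=\frac{p+1}{2p}$. Solving $\pi = (1-\pi)q + \pi q'$ gives stationary probability $\pi = \tfrac12$ exactly: the boundary digit $a_i=\frac{p-1}{2}$ contributes precisely the correction that makes the long-run carry frequency equal to $\tfrac12$ independently of $p$. The expected number of carries is therefore $\tfrac12(d+1)+O(1) = \frac{\log 2}{2\log p}\,k + O(1)$, yielding the claimed main term. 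For the error term I would invoke a central limit theorem for additive functionals of this finite, aperiodic, rapidly mixing chain: since $C_k$ is a sum of $\Theta(k)$ increments with exponentially decaying correlations, its fluctuations about the mean are of order $\sqrt{k}$, giving $C_k = \frac{\log 2}{2\log p}\,k + O(\sqrt{k})$.

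The genuine obstacle is the step that treats the digits of $2^k$ as random. What is actually available is Theorem \ref{ck}, which yields equidistribution of the digits only as an average over the exponent $b$ ranging through a full period $\gamma p^k$; it says nothing about the joint behaviour of the digits of a single fixed power $2^k$. Establishing that these digits are sufficiently independent is of the same order of difficulty as the Erd\H{o}s conjecture on the base-$3$ digits of $2^k$ quoted in the introduction, for which no method is known. This is exactly why the statement is posed as a conjecture rather than proved: the probabilistic machinery above pins down both the constant $\frac{\log 2}{2\log p}$ and the $O(\sqrt{k})$ scale once the digits are known to be equidistributed and weakly dependent, but that hypothesis is currently out of reach.
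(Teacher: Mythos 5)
The statement you were asked about is Conjecture \ref{UF}, which the paper does not prove: it is posed purely on the basis of the heuristic you describe (assuming the base-$p$ digits of $2^k$ behave randomly, together with computer experiments for primes $p<200$). Your derivation --- Kummer's theorem, the two-state carry chain whose stationary probability works out to exactly $\tfrac12$, and a CLT-scale error term --- is precisely that rationale made explicit, and you correctly identify that the missing ingredient (equidistribution and weak dependence of the digits of a single fixed power $2^k$, as opposed to the average over exponents provided by Theorem \ref{ck}) is the genuinely open problem, which is why the paper states this as a conjecture rather than a theorem. One small refinement to your heuristic: under your own random model the law of the iterated logarithm would give fluctuations of size $\Theta\left(\sqrt{k\log\log k}\right)$ infinitely often, so the deterministic $O(\sqrt{k})$ error term in the conjecture is actually slightly stronger than what the probabilistic model predicts.
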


\vskip 30pt
\paragraph{Acknowledgement}
The authors wish to thank prof. S\o ren Eilers for his helpful guidance and suggestions in the writing process; prof. Carl Pomerance for his encouragement and support; and the anonymous referee for his/her suggestions that significantly improved some of the proofs of this paper.

\begin{appendix}
\section{Source code}
    The following code checks that the central binomial coefficient $\binom{2n}{n}$ is divisible by 4 or 9 for every $n$ such that $4<n\leq2^{10^{13}}$ except for $n=64$ and $n=256$. 

The Java-code checks the first 35 digits of the base 3 representation of $2^k$ for every $k$ such that $0<k<10^{13}$. Every $k$ such that the first 35 digits of $2^k$ do not contain two 2's is written to a file containing special cases. These cases are then checked individually by the Python-code.

\newpage
\thispagestyle{empty}
\subsection*{JAVA source}
\addcontentsline{toc}{section}{JAVA-kode}
\begin{lstlisting}
import java.io.FileWriter;
import java.io.IOException;
import java.io.File;

class NewSearcher {
    private static int[] number = new int[35];
    private static int size = 0;
    private static final int MAX_SIZE = 35;
    
    private static final String ERROR_FILE = "Check_needed.txt";
    
    public static void main(String[] args) {
        deleteFile(ERROR_FILE);
        
        addNum(1);
        
        for (int a=0; a<10000000; a++) {
            for (int b=0; b<1000000; b++) {
                if (doubleIt()) {
                    String output = String.format("%d%06d", a, b);
                    System.out.println(output);
                    writeNumberToFile(ERROR_FILE, output);
                }
            }
        }
    }
    
    private static void addNum(int num) {
        if (size < MAX_SIZE) {
            number[size] = num;
            size ++;
        }
    }
    
    public static boolean doubleIt() {
        int totalCarry = 0;
        int carry = 0;
        int i=0;
        
        while (totalCarry < 2 && i<size) {
            int res = (number[i]*2 + carry);
            carry = (res>=3) ? 1 : 0;
            number[i] = (res % 3);
            if (carry==1) totalCarry ++;
            i++;
        }

        while (i<size) {
            int res = (number[i]*2 + carry);
            carry = (res>=3) ? 1 : 0;
            number[i] = (res % 3);
            i++;
        }
        
        if (carry == 1) {
            addNum(1);
        }
        return (totalCarry<2);
    }
    
    public static void writeNumberToFile(String filename, String number) {
        try
        {
            FileWriter fw = new FileWriter(filename, true);
            fw.write(number + "\r\n");
            fw.close();
        }
        catch(IOException e)
        {
            System.out.println("IOException: " + e.getMessage());
        }
    }
    
    public static void deleteFile(String filename) {
        try {
            File toDelete = new File(filename);
            toDelete.delete();
        } catch (Exception e) {
            
        }
    }
}
\end{lstlisting}

\definecolor{keywords}{RGB}{255,0,90}
\definecolor{comments}{RGB}{0,0,113}
\definecolor{red}{RGB}{160,0,0}
\definecolor{green}{RGB}{0,150,0}

\lstset{language=Python, 
        basicstyle=\ttfamily\small, 
        keywordstyle=\color{keywords},
        commentstyle=\color{comments},
        stringstyle=\color{red},
        showstringspaces=false,
        identifierstyle=\color{green},
        procnamekeys={def,class}}
\newpage
\thispagestyle{empty}
\subsection*{Python source}
\addcontentsline{toc}{section}{Python-kode}
\begin{lstlisting}

def mod(n, md):
    if n < 10:
        return 2**n%md
    
    return 2**(n%2)*mod(n/2, md)**2%md

def checkCarry(n):
    tmp = n
    count = 0
    while tmp and count<2:
        if tmp%3 == 2:
            count += 1
        tmp /= 3
        
    return count<2
        
fil = file("Check_needed.txt", "r")

nls = []

while True:
    try:
        next = int(fil.readline())
        if checkCarry(mod(next, 3**50)):
            nls.append(next)
    except ValueError:
        break

for i in nls:
    if checkCarry(mod(i, 3**80)):
        print i
\end{lstlisting}

\end{appendix}
\end{document}